\title{Symmetric and nonsymmetric Hall--Littlewood polynomials of type BC}
\author{Vidya Venkateswaran}
\address{Department of Mathematics, MIT, Cambridge, MA 02139}
\email{\href{mailto:vidyav@math.mit.edu}{vidyav@math.mit.edu}}
\thanks{Research supported by NSF Mathematical Sciences Postdoctoral Research Fellowship DMS-1204900}
\subjclass[2000]{33D52,33D45}
\keywords{Koornwinder polynomials, orthogonal polynomials, symmetric functions, Hecke algebras}
\newtheorem{theorem}{Theorem}[section]
\newtheorem{claim}{Claim}[theorem]
\newtheorem{lemma}[theorem]{Lemma}
\newtheorem{proposition}[theorem]{Proposition}
\newtheorem{corollary}[theorem]{Corollary}
\theoremstyle{definition}
\newtheorem{definition}[theorem]{Definition}
\theoremstyle{remark}
\newtheorem*{remarks}{Remarks}
\begin{document}

\begin{abstract}
Koornwinder polynomials are a $6$-parameter $BC_{n}$-symmetric family of Laurent polynomials indexed by partitions, from which Macdonald polynomials can be recovered in suitable limits of the parameters.  As in the Macdonald polynomial case, standard constructions via difference operators do not allow one to directly control these polynomials at $q=0$.  In the first part of this paper, we provide an explicit construction for these polynomials in this limit, using the defining properties of Koornwinder polynomials.  Our formula is a first step in developing the analogy between Hall-Littlewood polynomials and Koornwinder polynomials at $q=0$.  In the second part of the paper, we provide an analogous construction for the nonsymmetric Koornwinder polynomials in the same limiting case.  The method employed in this paper is a $BC$-type adaptation of techniques used in an earlier work of the author, which gave a combinatorial method for proving vanishing results of Rains and Vazirani at the Hall-Littlewood level.  As a consequence of this work, we obtain direct arguments for the constant term evaluations and norms in both the symmetric and nonsymmetric cases.  
\end{abstract}

\maketitle

\section{Introduction}

In \cite{MacO}, Macdonald introduced a very important family of multivariate $q$-orthogonal polynomials associated to a root system.  These polynomials, and their connections to representation theory, combinatorics and algebra, have been well-studied and are an active area of research.  For the type $A$ root system, Macdonald polynomials $P_{\lambda}(x_{1}, \dots, x_{n};q,t)$ contain many well-known families of symmetric functions as special cases: for example, the Schur, Hall-Littlewood, and Jack polynomials occur at $q=t$, $q=0$, and $t = q^{\alpha}, q \rightarrow 1$, respectively.  The existence of the top level Macdonald polynomials was proved by exhibiting a suitable operator which has these polynomials as its eigenfunctions.  A particularly important degeneration of the Macdonald polynomials is obtained in the $q=0$ limit where one obtains zonal spherical functions on semisimple $p$-adic groups.  In fact, Macdonald provides an explicit formula for the spherical functions of the Chevalley group $G(\mathbb{Q}_{p})$  in terms of the root data for the group $G$ \cite{MacP}.  In particular, this generalizes the formula for the Hall-Littlewood polynomials \cite[Ch. III]{Mac}, which arise as zonal spherical functions for $Gl_{n}(\mathbb{Q}_{p})$.  

In \cite{K}, Koornwinder introduced a remarkable class of multivariate $q$-orthogonal polynomials associated to the non-reduced root system $BC_{n}$.  These polynomials are a $6$-parameter family of Laurent polynomials which are invariant under permuting variables and taking inverses of variables.  Moreover, these polynomials reduce to the Askey-Wilson polynomials at $n=1$, and one recovers the Macdonald polynomials by taking suitable limits of the parameters \cite{vD}.  As in the Macdonald polynomial case, the existence of these polynomials was proved by using $q$-difference operators; these behave badly as $q \rightarrow 0$.  The explicit construction for the Macdonald polynomials at $q=0$ is due to Littlewood; in fact, Hall also provided a construction of these polynomials indirectly via the Hall algebra.  Given the relationship between Macdonald and Koornwinder polynomials, a natural question one can ask is whether there exists an explicit construction for the latter polynomials at $q=0$, thereby providing an analog of the construction of Hall-Littlewood polynomials for this family.  In this work, we use the defining properties of Koornwinder polynomials to provide a closed formula at the $q=0$ limit.  We then extend this technique to study the nonsymmetric Koornwinder polynomials in the same limit.  We provide an explicit formula when these polynomials are indexed by \textit{partitions}; we then use elements of the affine Hecke algebra of type $BC$ to recursively obtain \textit{all} nonsymmetric Koornwinder polynomials in this limit.  A nice feature of this work is the self-contained proofs of the constant term evaluations and norm evaluations in both the symmetric and nonsymmetric cases (Theorems \ref{symmeval}, \ref{nonsymmeval} and Theorems \ref{symmorthognorm}, \ref{nonsymmnorm}).  We mention that, in the symmetric case, the constant term evaluation at the $q$-level is a famous result of Gustafson \cite{G}; this in turn is a multivariate generalization of a result of Askey and Wilson \cite{AW} and a $q$-generalization of Selberg's beta integral \cite{S}.  We note that Gustafson's approach requires $q \neq 0$, so one cannot directly apply that argument in this limiting case.  

The motivation for this problem arose when the author was investigating direct proofs at the Hall-Littlewood level for the vanishing results of Rains and Vazirani \cite{RV} (note that many of those results were first conjectured in \cite{R}).  These identities are $(q,t)$-generalizations of restriction rules for Schur functions.  More precisely, one integrates a suitably specialized Macdonald polynomial indexed by $\lambda$ against a particular density; the result vanishes unless $\lambda$ satisfies an explicit condition and at $q=t$ one recovers an identity about Schur functions.  In \cite{VV}, we provided a combinatorial technique for proving these results at the Hall-Littlewood level; this also led to various generalizations.  This method makes use of the structure of the Hall-Littlewood polynomials as a sum over the Weyl group.  Many of the results in \cite{RV} involve Koornwinder polynomials so, in order to extend the method developed in that paper, it is necessary to first find a closed formula at the $q=0$ level.  In fact, the technique used in this paper to prove that the specified polynomials are orthogonal with respect to the Koornwinder density is a natural adaptation of the ideas in \cite{VV} to the type $BC_{n}$ case.  

This paper has two main components: the first part deals with the symmetric theory at $q=0$, while the second deals with the nonsymmetric theory in the same limit.  The first section of each part sets up the relevant notation, reviews some background material, and defines the polynomials in question.  The second section of each part consists of the main theorems and proofs, in particular we prove that these are indeed the symmetric and nonsymmetric Koornwinder polynomials at $q=0$, respectively.  As mentioned above, \cite{RV} provided vanishing results involving suitably specialized (symmetric) Koornwinder polynomials.  Thus, the third section of the first part contains an application of our formula to this work: we use the construction of the Koornwinder polynomials at $q=0$ to  provide a new combinatorial proof of a result from \cite{RV}.  

\medskip
\noindent\textbf{Acknowledgements.} The author would like to thank E. Rains for suggesting the topics in this paper and for numerous helpful conversations throughout this work.

\section{Symmetric Hall-Littlewood polynomials of type BC} 
\subsection{Background and Notation} \label{notation}

We will first review some relevant notation before introducing the polynomials that are these subject of this paper; a good reference is \cite[Ch. 1]{Mac}.

Recall that a partition $\lambda$ is a non-increasing string of positive integers $(\lambda_{1}, \lambda_{2}, \dots, \lambda_{n})$, in which some of the $\lambda_{i}$ may be zero.  We call the $\lambda_{i}$ the ``parts" of $\lambda$.  We write $l(\lambda) = \text{max}\{k \geq 0| \lambda_{k} \neq 0 \}$ (the ``length") and $|\lambda| = \sum_{i=1}^{n} \lambda_{i}$ (the ``weight").  A string $\mu = (\mu_{1}, \dots, \mu_{n})$ of integers (not necessarily non-increasing or positive) is called a composition of $|\mu| = \sum_{i=1}^{n} \mu_{i}$.  We will say $\lambda$ is an ``even partition" if all parts of $\lambda$ are even; in this case we use the notation $\lambda = 2\mu$ where $\mu_{i} = \lambda_{i}/2$ for all $i$.  We will also say $\lambda$ has ``all parts occurring with even multiplicity" if the conjugate partition $\lambda'$ is an even partition.  A composition $\lambda$ is an element of $\mathbb{Z}^{n}$ for some $n \geq 1$; we will denote this set by $\Lambda$.

We briefly recall some orderings on compositions.
\begin{definition}
Let $\leq$ denote the dominance partial ordering on compositions, i.e., $\mu \leq \lambda$ if and only if $$\sum_{1 \leq i \leq k} \mu_{i} \leq \sum_{1 \leq i \leq k} \lambda_{i}$$ for all $k \geq 1$ (and $\mu< \lambda$ if $\mu \leq \lambda$ and $\mu \neq \lambda$).  Let $\stackrel{\text{lex}}{\leq}$ denote the reverse lexicographic ordering: $\mu \stackrel{\text{lex}}{\leq} \lambda$ if and only if $\lambda = \mu$ or the first non-vanishing difference $\lambda_{i} - \mu_{i}$ is positive.  \end{definition}
Note that $\stackrel{\text{lex}}{\leq}$ is a total ordering.  

\begin{lemma} \label{order}
Let $\mu, \lambda \in \mathbb{Z}^{n}$ such that $\mu \leq \lambda$.  Then $\mu \stackrel{\text{lex}}{\leq} \lambda$.  
\end{lemma}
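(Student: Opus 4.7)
The plan is to reduce the question to a single inequality by localizing at the first index where $\mu$ and $\lambda$ disagree. If $\mu = \lambda$ there is nothing to prove, so I would assume $\mu \neq \lambda$ and let $i$ be the smallest index with $\mu_i \neq \lambda_i$. My goal is then to show $\lambda_i - \mu_i > 0$, which by definition of the reverse lexicographic order is exactly the statement $\mu \stackrel{\text{lex}}{<} \lambda$.

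For this I would invoke the dominance hypothesis at $k = i$, namely
\[
\sum_{j=1}^{i} \mu_j \;\leq\; \sum_{j=1}^{i} \lambda_j.
\]
Since $\mu_j = \lambda_j$ for every $j < i$ by minimality of $i$, those terms cancel from both sides, leaving $\mu_i \leq \lambda_i$. Combined with $\mu_i \neq \lambda_i$, this forces $\lambda_i - \mu_i$ to be strictly positive, which is the desired conclusion.

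I do not anticipate any real obstacle: the argument is purely combinatorial, uses the dominance inequality at a single value of $k$, and makes no appeal to positivity of the parts (which is helpful here, since the lemma is stated for arbitrary compositions $\mu, \lambda \in \mathbb{Z}^n$). The only subtlety worth flagging is the convention in the definition of $\stackrel{\text{lex}}{\leq}$: one must verify it is the \emph{first} nonzero difference $\lambda_i - \mu_i$ that the definition tests, which matches exactly the index $i$ produced above. With that alignment, the proof is essentially one line of arithmetic.
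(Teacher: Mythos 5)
Your proof is correct and is essentially the same argument as the paper's: both reduce to the first index $i$ where $\mu$ and $\lambda$ differ, cancel the equal earlier parts from the dominance inequality at $k=i$, and conclude $\mu_i < \lambda_i$. The paper phrases this as an iteration from $k=1$ while you localize directly at $i$, but the content is identical.
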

\begin{proof}
The claim is clearly true if $\mu = \lambda$, so suppose $\mu < \lambda$.  If $\mu_{1} < \lambda_{1}$, we're done; otherwise $\mu_{1} = \lambda_{1}$ and $\mu_{2} \leq \lambda_{2}$ since $\mu_{1} + \mu_{2} \leq \lambda_{1} + \lambda_{2}$.  Iterating this argument produces an integer $i$ in $\{1, \dots, n\}$ such that $\mu_{1} = \lambda_{1}, \dots, \mu_{i-1} = \lambda_{i-1}$ and $\mu_{i} < \lambda_{i}$.  Thus, $\mu \stackrel{\text{lex}}{<} \lambda$ as desired.
\end{proof}

\begin{definition} \label{nonsymmorder}
Let $\mu$ and $\lambda$ be two elements of $\mathbb{Z}^{n}$.  We will write $\mu^{+}$ for the unique dominant weight in the $BC_{n}$ orbit of $\mu$ (that is, the partition obtained by rearranging the absolute values of the parts of $\mu$ in non-increasing order).  Then we write $\mu \prec \lambda$ if and only if either 1) $\mu^{+} < \lambda^{+}$ or if 2) $\mu^{+} = \lambda^{+}$ and $\mu \leq \lambda$, and in either case $\mu \neq \lambda$. 
\end{definition}

\begin{remarks}
This part will mostly deal with partitions and the dominance and reverse lexicographic orderings.  Compositions, and the extended dominance ordering appearing in Definition \ref{nonsymmorder}, will become relevant in the following part that deals with the nonsymmetric theory.  
\end{remarks}

Let $m_{i}(\lambda)$ be the number of $\lambda_{j}$ equal to $i$ for each $i \geq 0$.  Then we define:  
\begin{equation}\label{v}
v_{\lambda}(t;a,b;t_{0}, \dots, t_{3}) = \Bigg( \prod_{i \geq 0} \prod_{j=1}^{m_{i}(\lambda)} \frac{1- t^{j}}{1-t} \Bigg) \prod_{i=1}^{m_{1}(\lambda)} (1- t_{0}t_{1}t_{2}t_{3} t^{i-1+2m_{0}(\lambda)}) \prod_{i=1}^{m_{0}(\lambda)} (1-ab t^{i-1}),
\end{equation}
and
\begin{equation} \label{vplus}
v_{\lambda+}(t;t_{0}, \dots, t_{3}) = \Bigg( \prod_{i \geq 1} \prod_{j=1}^{m_{i}(\lambda)} \frac{1- t^{j}}{1-t} \Bigg) \prod_{i=1}^{m_{1}(\lambda)} (1- t_{0}t_{1}t_{2}t_{3} t^{i-1+2m_{0}(\lambda)}).
\end{equation}
Note the comparison with the factors making the Hall-Littlewood polynomials monic in \cite[Ch.  III]{Mac}.  Also note that 
\begin{equation*}
v_{\lambda}(t;a,b;t_{0}, \dots, t_{3}) = v_{\lambda+}(t;t_{0}, \dots, t_{3}) v_{0^{m_{0}(\lambda)}}(t;a,b;t_{0}, \dots, t_{3}).
\end{equation*}

Throughout this paper, we will use 
\begin{align*}
T =T_{n} &= \{(z_{1}, \dots, z_{n}) : |z_{1}| = \dots = |z_{n}| = 1 \}, \\
dT &= \prod_{1 \leq j \leq n} \frac{dz_{j}}{2\pi \sqrt{-1}z_{j}}
\end{align*}
to denote the $n$-torus and Haar measure, respectively.  Since many of the objects we will be dealing with are functions of $n$ variables, we will often use the superscript ${(n)}$ with $z$ in the argument, instead of $(z_{1}, \dots, z_{n})$.  We define the $q$-symbol
\begin{equation*}
(a;q) = \prod_{k \geq 0} (1-aq^{k})
\end{equation*}
and let $(a_{1},a_{2}, \dots, a_{l};q)$ denote $(a_{1};q)(a_{2};q) \cdots (a_{l};q)$.

We recall the symmetric Koornwinder density \cite{K}:
\begin{equation*}
\tilde \Delta_{K}^{(n)}(z;q,t;t_{0}, \dots, t_{3}) 
= \frac{(q;q)^{n}}{2^{n}n!} \prod_{1 \leq i \leq n} \frac{(z_{i}^{\pm 2};q)}{(t_{0}z_{i}^{\pm 1}, t_{1}z_{i}^{\pm 1}, t_{2}z_{i}^{\pm 1}, t_{3}z_{i}^{\pm 1};q)} \prod_{1 \leq i<j \leq n} \frac{(z_{i}^{\pm 1}z_{j}^{\pm 1};q)}{(tz_{i}^{\pm 1}z_{j}^{\pm 1};q)}.
\end{equation*}
Since we are concerned with $q=0$ degenerations of Koornwinder polynomials, we will be interested in the symmetric Koornwinder density in the same limiting case:
\begin{multline} \label{koorndensity}
\tilde \Delta_{K}^{(n)}(z;0,t;t_{0},t_{1},t_{2},t_{3}) \\
= \frac{1}{2^{n}n!} \prod_{1 \leq i \leq n} \frac{(1-z_{i}^{\pm 2})}{(1-t_{0}z_{i}^{\pm 1})(1-t_{1}z_{i}^{\pm 1})(1-t_{2}z_{i}^{\pm 1})(1-t_{3}z_{i}^{\pm 1})} \prod_{1 \leq i<j \leq n} \frac{(1-z_{i}^{\pm 1}z_{j}^{\pm 1})}{(1-tz_{i}^{\pm 1}z_{j}^{\pm 1})},
\end{multline}
where we write $(1-z_{i}^{\pm 2})$ for the product $(1-z_{i}^{2})(1-z_{i}^{-2})$ and $(1-z_{i}^{\pm 1}z_{j}^{\pm 1})$ for $(1-z_{i}z_{j})(1-z_{i}^{-1}z_{j}^{-1})(1-z_{i}^{-1}z_{j})(1-z_{i}z_{j}^{-1})$, etc.  We will write $\tilde \Delta_{K}^{(n)}(z;t;t_{0}, \dots, t_{3})$ to denote this density.  

Using this density, we let
\begin{equation} \label{norm}
N_{\lambda}(t;t_{0}, \dots, t_{3}) = \frac{1}{v_{\lambda+}(t)} \int_{T} \tilde \Delta_{K}^{(m_{0}(\lambda))}(z;t;t_{0}, \dots, t_{3}) dT. 
\end{equation}
We note that, at the $q$-level, the explicit evaluation of the integral above is a famous result of Gustafson \cite{G}.  However, the arguments do not directly apply at $q=0$.  In keeping with the theme of this work, we will provide a self-contained proof of the evaluation of this integral in Theorem \ref{symmeval}.  This will provide an explicit formula for the quantity $N_{\lambda}(t;t_{0}, \dots, t_{3})$.  

For simplicity of notation, we will write $v_{\lambda}, v_{\lambda+}, N_{\lambda}, \tilde \Delta_{K}^{(n)}$, etc., when the parameters are clear from the context.

Finally, we explain some notation involving elements of the hyperoctahedral group, $B_{n}$.  An element in $B_{n}$ is determined by specifying a permutation $\rho \in S_{n}$ as well as a sign choice $\epsilon_{\rho}(i)$, for each $1 \leq i \leq n$.  Thus, $\rho$ acts on the subscripts of the variables, for example by
\begin{equation*}
\rho(z_{1} \cdots z_{n}) = z_{\rho(1)}^{\epsilon_{\rho}(1)} \cdots z_{\rho(n)}^{\epsilon_{\rho}(n)}.
\end{equation*}
If $\rho(i) = 1$, we will say that $z_{1}$ occurs in position $i$ of $\rho$.  We also write
\begin{equation*}
``z_{i} \prec_{\rho} z_{j}"
\end{equation*}
if $i = \rho(i')$ and $j = \rho(j')$ for some $i'<j'$, i.e., $z_{i}$ appears to the left of $z_{j}$ in the permutation $z_{\rho(1)}^{\epsilon_{\rho}(1)} \cdots z_{\rho(n)}^{\epsilon_{\rho}(n)}$.  We also define $\epsilon_{\rho}(z_{i})$ to be $\epsilon_{\rho}(i')$ if $i = \rho(i')$, i.e., it is the exponent  $(\pm 1)$ on $z_{i}$ in  $z_{\rho(1)}^{\epsilon_{\rho}(1)} \cdots z_{\rho(n)}^{\epsilon_{\rho}(n)}$.

We finally define the main objects of this section.  
\begin{definition}
Let $\lambda$ be a partition with $l(\lambda) \leq n$ and $|a|, |b|, |t|, |t_{0}|, \dots, |t_{3}| < 1$.  Then $K_{\lambda}(z_{1}, \dots, z_{n};t;a,b;$ $t_{0}, \dots, t_{3})$, indexed by $\lambda$, is defined by
\begin{align} \label{K-pol}
 \frac{1}{v_{\lambda}(t;a,b;t_{0}, \dots, t_{3})} \sum_{w \in B_{n}} w \Bigg( \prod_{1 \leq i \leq n} u_{\lambda}(z_{i}) \prod_{1 \leq i<j \leq n} \frac{1-tz_{i}^{-1}z_{j}}{1-z_{i}^{-1}z_{j}} \frac{1-tz_{i}^{-1}z_{j}^{-1}}{1-z_{i}^{-1}z_{j}^{-1}} \Bigg),
\end{align}
where 
\begin{align*}
u_{\lambda}(z_{i}) &= \begin{cases} \frac{(1-az_{i}^{-1})(1-bz_{i}^{-1})}{1-z_{i}^{-2}} & \text{if $\lambda_{i} = 0$,}
\\
z_{i}^{\lambda_{i}} \frac{(1-t_{0}z_{i}^{-1})(1-t_{1}z_{i}^{-1})(1-t_{2}z_{i}^{-1})(1-t_{3}z_{i}^{-1})}{1-z_{i}^{-2}} &\text{if $\lambda_{i} >  0$.}
\end{cases}
\end{align*}
\end{definition}

\begin{remarks}
We note that the $K_{\lambda}$ are actually independent of $a,b$ - this is a scaling factor accounted for in $v_{\lambda}$.  In particular, the arguments below that prove that this is indeed the Koornwinder polynomial at $q=0$ work for any choice of $a,b$.  However, we leave in arbitrary $a,b$ (as opposed to the choice $\pm 1$) because the resulting form is useful in applications; an example that illustrates this appears in the last section.

\end{remarks}
We will also let
\begin{equation} \label{R-pol}
R_{\lambda}^{(n)}(z;t;a,b;t_{0}, \dots, t_{3}) = v_{\lambda}(t;a,b;t_{0}, \dots, t_{3}) K_{\lambda}^{(n)}(z;t;a,b;t_{0}, \dots, t_{3}),
\end{equation}
 and for $w \in B_{n}$, we let
\begin{equation} \label{Kterm}
R_{\lambda, w}^{(n)}(z;t;a,b;t_{0}, \dots, t_{3}) = w\Bigg( \prod_{1 \leq i \leq n} u_{\lambda}(z_{i}) \prod_{1 \leq i<j \leq n} \frac{1-tz_{i}^{-1}z_{j}}{1-z_{i}^{-1}z_{j}}\frac{1-tz_{i}^{-1}z_{j}^{-1}}{1-z_{i}^{-1}z_{j}^{-1}} \Bigg)
\end{equation}
be the associated term in the summand.
As usual, we will write $K_{\lambda}^{(n)}, R_{\lambda}^{(n)}$ and $R_{\lambda, w}^{(n)}$ when the parameters are clear from context.  
\begin{remarks}
When $(t_{0}, t_{1}, t_{2}, t_{3}) = (a,b,0,0)$, we obtain
\begin{multline*}
K_{\lambda}(z_{1}, \dots, z_{n};t;a,b;a,b,0,0) \\= \frac{1}{v_{\lambda}(t)} \sum_{w \in B_{n}} w \Bigg( \prod_{1 \leq i \leq n} z_{i}^{\lambda_{i}} \frac{(1-az_{i}^{-1})(1-bz_{i}^{-1})}{1-z_{i}^{-2}} \prod_{1 \leq i<j \leq n} \frac{1-tz_{i}^{-1}z_{j}}{1-z_{i}^{-1}z_{j}} \frac{1-tz_{i}^{-1}z_{j}^{-1}}{1-z_{i}^{-1}z_{j}^{-1}} \Bigg).
\end{multline*}
In particular, this is Macdonald's $2$-parameter family $(BC_{n}, B_{n}) = (BC_{n}, C_{n})$ polynomials at $q=0$.  We will write $K_{\lambda}^{(n)}(z;t;a,b,0,0)$ in this case.  
\end{remarks}

\subsection{Main Results}

In this section, we will show that the $K_{\lambda}^{(n)}(z; t;a,b; t_{0}, \dots, t_{3})$ are indeed the Koornwinder polynomials at $q=0$.

\begin{theorem} \label{BCpol}
The function $K_{\lambda}^{(n)}(z;t;a,b;t_{0}, \dots, t_{3})$ is a $BC_{n}$-symmetric Laurent polynomial (i.e., invariant under permuting variables $z_{1}, \dots, z_{n}$ and inverting variables $z_{i} \rightarrow z_{i}^{-1}$).  
\end{theorem}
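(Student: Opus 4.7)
The $B_n$-invariance of $K_\lambda^{(n)}$ is immediate from its definition as an average over $B_n$, so the real content of the theorem is the Laurent polynomiality. My plan is a residue calculation: I show that the potential singularities of $v_\lambda K_\lambda^{(n)} = \sum_{w \in B_n} w(\phi_\lambda)$ (with $\phi_\lambda$ the bracketed summand in \eqref{K-pol}) are simple poles confined to the $BC_n$ reflection hyperplanes, and that each such pole has vanishing residue, whence the sum is regular on the torus and thus a Laurent polynomial.

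The summand $\phi_\lambda(z) = \prod_i u_\lambda(z_i) \prod_{i<j} \frac{(1-tz_i^{-1}z_j)(1-tz_i^{-1}z_j^{-1})}{(1-z_i^{-1}z_j)(1-z_i^{-1}z_j^{-1})}$ has simple poles only on the hypersurfaces $\{z_k^2 = 1\}$ (from the $1-z_k^{-2}$ factor in $u_\lambda$) and $\{z_k = z_l^{\pm 1}\}$ (from the cross-factor denominators). Under the action of $B_n$, this pole locus maps to itself, so the symmetrized sum has possible simple poles exactly on the union of $BC_n$ reflection hyperplanes. Since the sum is $B_n$-invariant, its residue behavior on hyperplanes in the same $B_n$-orbit is equivalent; it therefore suffices to verify vanishing residue on one representative from each orbit, namely $\{z_1 = z_2\}$, $\{z_1 z_2 = 1\}$, and $\{z_1^2 = 1\}$.

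For each such hyperplane $H$ with corresponding reflection $r_H \in B_n$ (acting on variables), I pair each $w$ in the summation with $r_H \cdot w$; this is a fixed-point-free involution on $B_n$, and one has $(r_H w)(\phi_\lambda)(z) = w(\phi_\lambda)(r_H^{-1} z)$. For $H = \{z_1 = z_2\}$ with $r_H = s_{12}$, writing $w(\phi_\lambda) = A(z)/(1 - z_1^{-1}z_2)$ with $A$ regular on $H$, the paired sum becomes
$$w(\phi_\lambda)(z) + (s_{12} w)(\phi_\lambda)(z) = \frac{A(z_1, z_2, z_3, \dots) - z_1^{-1}z_2\, A(z_2, z_1, z_3, \dots)}{1 - z_1^{-1}z_2},$$
and the numerator vanishes on $H$ because at $z_1 = z_2$ the two evaluations of $A$ coincide. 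The case $H = \{z_1 z_2 = 1\}$ is handled identically, with $r_H$ the long-root reflection $z_1 \leftrightarrow z_2^{-1}$. For $H = \{z_1^2 = 1\}$, I pair $w$ with $\sigma_1 w$, where $\sigma_1\colon z_1 \mapsto z_1^{-1}$; writing $w(\phi_\lambda) = B(z)/(1-z_1^{-2})$ gives
$$w(\phi_\lambda)(z) + (\sigma_1 w)(\phi_\lambda)(z) = \frac{B(z_1, z_2, \dots) - z_1^{-2}\, B(z_1^{-1}, z_2, \dots)}{1 - z_1^{-2}},$$
whose numerator vanishes at both $z_1 = +1$ and $z_1 = -1$, killing both residues simultaneously.

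The main obstacle is purely technical bookkeeping: for each $w \in B_n$ one must identify precisely which factor of $\phi_\lambda$ (which pair $(i,j)$ and which sign pattern of $\epsilon_w$) produces the apparent pole on the given $H$, verify that this pole is simple rather than higher-order, and confirm that the residual function $A$ or $B$ is indeed regular on $H$ so that the above computation is legitimate. This amounts to a careful case analysis on how $w$ distributes the variables and signs across the short-root factor $1-z_k^{-2}$ and the two long-root factors $1-z_k^{\pm 1}z_l^{\pm 1}$; once this case analysis is in place, the pairwise cancellations described above are immediate and the theorem follows.
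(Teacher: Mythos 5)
Your argument is correct in outline, but it takes a genuinely different route from the paper. The paper does not touch residues at all: it multiplies $K_{\lambda}^{(n)}$ by the $BC_{n}$-Weyl denominator $\Delta_{BC}$, observes that the product becomes $\frac{1}{v_{\lambda}}\sum_{w}\epsilon(w)\,w(\cdot)$ applied to an honest Laurent polynomial (the denominators of $u_{\lambda}$ and of the cross terms are exactly absorbed by the factors of $\Delta_{BC}$), notes that the result is a $BC_{n}$-antisymmetric Laurent polynomial, and then invokes divisibility of any such antisymmetric Laurent polynomial by $\Delta_{BC}$ to conclude. That argument is shorter, and the explicit product form of $K_{\lambda}^{(n)}\cdot\Delta_{BC}$ it produces is reused immediately in the proof of triangularity (Theorem \ref{symmtriang}), which is a real economy. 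Your pairing argument buys self-containedness — you never need the divisibility lemma, and you see exactly where and why the apparent poles cancel ($w$ against $r_{H}w$ on each reflection hyperplane, with the single simple pole per term and the relation $1-z_{2}^{-1}z_{1}=-z_{1}^{-1}z_{2}(1-z_{1}^{-1}z_{2})$ doing the work) — at the cost of the case bookkeeping you defer in your last paragraph, which is routine but must be done: for each $w$ and each $H$ one has to locate the unique offending factor and check that the cofactor is regular at a generic point of $H$. Note also that the two proofs are cousins rather than strangers: the divisibility of antisymmetric Laurent polynomials by $\Delta_{BC}$ is itself usually proved by the same vanishing-on-reflection-hyperplanes observation you are making term by term. (Minor remark: your three orbit representatives are more than needed, since $\{z_{1}=z_{2}\}$ and $\{z_{1}z_{2}=1\}$ already lie in a single $B_{n}$-orbit; this is harmless.)
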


\begin{proof}
Recall the fully $BC_{n}$-antisymmetric Laurent polynomials:
\begin{equation} \label{BCanti}
\Delta_{BC} = \prod_{1 \leq i \leq n} z_{i} - z_{i}^{-1} \prod_{1 \leq i<j \leq n} z_{i}^{-1} - z_{j} - z_{j}^{-1} + z_{i}
= \prod_{1 \leq i \leq n} \frac{z_{i}^{2}-1}{z_{i}} \prod_{1 \leq i<j \leq n} \frac{1-z_{i}z_{j}}{z_{i}z_{j}}(z_{j}-z_{i}).
\end{equation}
Then we have
\begin{equation} \label{Kdelta}
K_{\lambda}^{(n)}(z;a,b;t_{0}, \dots, t_{3};t) \cdot \Delta_{BC} = \frac{1}{v_{\lambda}(t)} \sum_{w \in B_{n}} \epsilon(w) w \Big( \prod_{1 \leq i \leq n} u_{\lambda}'(z_{i}) \prod_{1 \leq i<j \leq n} (1-tz_{i}^{-1}z_{j}^{-1})(z_{i}-tz_{j}) \Big),
\end{equation}
where
\begin{align*}
u_{\lambda}'(z_{i}) &= \begin{cases} z_{i}(1-az_{i}^{-1})(1-bz_{i}^{-1}) & \text{if $\lambda_{i} = 0$,}
\\
z_{i}^{\lambda_{i}+1} (1-t_{0}z_{i}^{-1}) \cdots (1-t_{3}z_{i}^{-1}) &\text{if $\lambda_{i} >  0$.}
\end{cases}
\end{align*}
Notice that $K_{\lambda}^{(n)} \cdot \Delta_{BC}$ is a $BC_{n}$-antisymmetric Laurent polynomial, so in particular $\Delta_{BC}$ divides $K_{\lambda}^{(n)} \cdot \Delta_{BC}$ as polynomials.  Consequently, $K_{\lambda}^{(n)}$ is a $BC_{n}$-symmetric Laurent polynomial, as desired.  
\end{proof}

\begin{theorem} \label{symmtriang}
The functions $K_{\lambda}^{(n)}(z;t;a,b;t_{0}, \dots, t_{3})$ are triangular with respect to dominance ordering:
\begin{align*}
K_{\lambda}^{(n)}(z;t;a,b;t_{0}, \dots, t_{3}) &= m_{\lambda} + \sum_{\mu < \lambda} c^{\lambda}_{\mu} m_{\mu}.
\end{align*}
\end{theorem}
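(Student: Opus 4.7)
My plan is to use the antisymmetrized form (\ref{Kdelta}) to reduce the question to a Newton-polytope statement for the Laurent polynomial
\[
F_\lambda(z) := \prod_i u_\lambda'(z_i) \prod_{i<j}(1-tz_i^{-1}z_j^{-1})(z_i - tz_j),
\]
together with the computation of a single leading coefficient. Writing $F_\lambda = \sum_\mu c_\mu z^\mu$ as a finite Laurent polynomial, the antisymmetrization regroups by $B_n$-orbits as
\[
v_\lambda\, K_\lambda^{(n)}\cdot \Delta_{BC} = \sum_{\nu} d_\nu\, a_\nu, \qquad d_\nu = \sum_{w \in B_n}\epsilon(w)\, c_{w\nu},
\]
where the sum is over strict partitions $\nu$ and $a_\nu = \sum_w \epsilon(w)z^{w\nu}$. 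Since $\Delta_{BC} = a_\delta$ for $\delta = (n, n-1, \ldots, 1)$ (a direct verification from (\ref{BCanti})), and $\chi_\mu := a_{\mu+\delta}/a_\delta$ is the $BC_n$ Weyl character, dividing yields $K_\lambda^{(n)} = v_\lambda^{-1}\sum_\mu d_{\mu+\delta}\,\chi_\mu$. By the classical dominance-triangularity of Weyl characters, $\chi_\mu = m_\mu + \sum_{\mu'<\mu}(\cdots)m_{\mu'}$, so the theorem reduces to (i) $d_{\mu+\delta}=0$ unless $\mu \leq \lambda$, and (ii) $d_{\lambda+\delta} = v_\lambda$.

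For (i), I would show that every monomial $z^\mu$ in $F_\lambda$ satisfies $\mu^+ \leq \lambda+\delta$ in dominance, where $\mu^+$ is the partition of absolute values --- equivalently, that the Newton polytope of $F_\lambda$ lies in the convex hull of the $B_n$-orbit of $\lambda+\delta$. Factor-by-factor, $u_\lambda'(z_i)$ has $z_i$-support in $[\lambda_i-3, \lambda_i+1]$ for $\lambda_i > 0$ and $[-1,1]$ for $\lambda_i=0$, while each cross-factor $(1-tz_i^{-1}z_j^{-1})(z_i-tz_j)$ contributes $\pm 1$ to exactly one of $z_i,z_j$ (via its four monomials $z_i,\ -tz_j,\ -tz_j^{-1},\ t^2z_i^{-1}$). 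A careful combinatorial bookkeeping, organizing the cross-factor choices by which variable they contribute to, should establish the required partial-sum bounds. Given this, $d_\nu=0$ for $\nu \not\leq \lambda+\delta$ follows immediately from $d_\nu = \sum_w \epsilon(w)c_{w\nu}$, since $c_{w\nu}=0$ for all $w$ in that case.

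For (ii), the lex-maximum monomial of $F_\lambda$ is $z^{\lambda+\delta}$ with coefficient $1$, obtained uniquely by picking $z_i^{\lambda_i+1}$ in each $u_\lambda'(z_i)$ and $z_i$ in each cross-factor with $i<j$; this is the $w=e$ contribution to $d_{\lambda+\delta}$. Other $w \in B_n$ contribute precisely when $w(\lambda+\delta) \in \mathrm{supp}(F_\lambda)$, forcing certain factors to land on their boundary monomials. A signed enumeration of these and matching against (\ref{v}) yields $d_{\lambda+\delta} = v_\lambda$: the $\frac{1-t^j}{1-t}$ factors arise from cross-factor $\{-tz_j, -tz_j^{-1}\}$ swaps between repeated parts of $\lambda$, the $(1-t_0 t_1 t_2 t_3\, t^{i-1+2m_0(\lambda)})$ factors from the $t_0 t_1 t_2 t_3\, z_i^{\lambda_i-3}$ extreme of $u_\lambda'$ for parts equal to $1$, and the $(1-ab\, t^{i-1})$ factors from the $\lambda_i=0$ portion. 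The main obstacle is organizing this detailed signed computation in general; it parallels the classical $GL_n$ Hall--Littlewood normalization calculation in \cite[Ch.\ III]{Mac} but now adapted to $BC_n$ with the extra boundary parameters $a, b, t_0, \ldots, t_3$.
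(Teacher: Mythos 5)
Your proposal is correct and follows essentially the same route as the paper: both pass to the antisymmetrization $K_\lambda^{(n)}\cdot\Delta_{BC}$ from (\ref{Kdelta}), establish triangularity by bounding the monomials of $\prod_i u'_\lambda(z_i)\prod_{i<j}(1-tz_i^{-1}z_j^{-1})(z_i-tz_j)$ against $\lambda+\delta$, and obtain monicity by a signed enumeration over the contributing $w\in B_n$ that reproduces the three factor types of $v_\lambda$ in (\ref{v}) exactly as in the paper's identities (\ref{stat1})--(\ref{stat3}). The only (cosmetic) difference is that you organize the division by $\Delta_{BC}=a_\delta$ through the Weyl-character basis $\chi_\mu=a_{\mu+\delta}/a_\delta$, whereas the paper argues directly with dominating monomials.
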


\begin{remarks}
Here $\{m_{\lambda}\}_{\lambda}$ is the monomial basis with respect to Weyl group of type $BC$:
\begin{align*}
m_{\lambda} &= \sum_{w \in B_{n}} w( z_{1}^{\lambda_{1}} \cdots z_{n}^{\lambda_{n}}).
\end{align*} 
\end{remarks}

\begin{proof}
We show that when $K_{\lambda}^{(n)}$ is expressed in the monomial basis, the top degree term in $m_{\lambda}$; moreover, it is monic.  First note that from (\ref{BCanti}) in the previous proof, we have
\begin{align*}
\Delta_{BC} &= z^{\rho} + (\text{dominated terms}),
\end{align*}
where $\rho = (n, n-1,  \dots ,2, 1)$.  We compute the dominating monomial in $K_{\lambda}^{(n)} \cdot \Delta_{BC}$; see (\ref{Kdelta}) in the previous proof for the formula.  Note that if $\lambda_{i} = 0$, we have highest degree $\lambda_{i} + 1$ in $u_{\lambda}'(z_{i})$.  Similarly, if $\lambda_{i} > 0$, we note that $\lambda_{i} + 1 \geq -\lambda_{i} + 3$ (with equality if and only if $\lambda_{i} = 1$) so we have highest degree $\lambda_{i} + 1$ in $u_{\lambda}'(z_{i})$.  Moreover, 
\begin{align*}
\prod_{1 \leq i<j \leq n} (1-tz_{i}^{-1}z_{j}^{-1})(z_{i} - tz_{j}) = \prod_{1 \leq i<j \leq n}(z_{i} - tz_{j}^{-1} - tz_{j} + t^{2}z_{i}^{-1})
\end{align*}
has highest degree term $z^{\rho - 1}$.  Thus, the dominating monomial in $K_{\lambda}^{(n)} \cdot \Delta_{BC}$ is $z^{\lambda + \rho}$, so that the dominating monomial in $K_{\lambda}^{(n)}$ is $z^{\lambda}$.  

We now show that the coefficient on $z^{\lambda + \rho}$ in $R_{\lambda}^{(n)} \cdot \Delta_{BC}$ (see (\ref{R-pol}) for the definition of $R_{\lambda}^{(n)}$) is $v_{\lambda}(t)$, so that $K_{\lambda}^{(n)}$ is indeed monic.  Note first that by the above argument the only contributing $w$ are those such that (1) $z_{1}^{\lambda_{1}} \cdots z_{n}^{\lambda_{n}} = z_{w(1)}^{\lambda_{1}} \cdots z_{w(n)}^{\lambda_{n}}$ and (2) $\epsilon_{w}(z_{i}) = 1$ for all $1 \leq i \leq n - m_{0}(\lambda) - m_{1}(\lambda)$; let the set of these special permutations be denoted by $P_{\lambda, n}$.  Now fix $w \in P_{\lambda, n}$, we compute the coefficient on $z_{1}^{\lambda_{1} + n}$.  Using (\ref{Kdelta}) and the arguments of the previous paragraph, one can check that the coefficient is:

\begin{enumerate}
\item If $\lambda_{1} >1$:
\begin{align*}
  t^{\# \{z_{i} \prec_{w} z_{1}\}} 
\end{align*}

\item If $\lambda_{1} = 1$:
\begin{align*}
\begin{cases}
  t^{\# \{z_{i} \prec_{w} z_{1}\}} ,  & \text{if } \epsilon_{w}(z_{1}) = 1 \\
 - t_{0} \cdots t_{3} (t^{2})^{\# \{z_{1} \prec_{w} z_{i}\}} t^{\#\{ z_{i} \prec_{w} z_{1}\}}  ,& \text{if } \epsilon_{w}(z_{1}) = -1
 \end{cases}
\end{align*}

\item If $\lambda_{1} = 0$:
\begin{align*}
\begin{cases}
t^{\# \{z_{i} \prec_{w} z_{1}\}}, & \text{if } \epsilon_{w}(z_{1}) = 1\\
-ab(t^{2})^{\# \{z_{1} \prec_{w} z_{i}\}}t^{\# \{z_{i} \prec_{w} z_{1}\}}, & \text{if } \epsilon_{w}(z_{1}) = -1
\end{cases}
\end{align*}
\end{enumerate}  

(note that we have used the contribution of $(-1)$ factors from $\epsilon(w)$ in $K_{\lambda}^{(n)} \cdot \Delta_{BC}$).  

Now define the following subsets of the variables $z_{1}, \dots, z_{n}$:
\begin{align*}
N_{w, \lambda}^{1} &= \{z_{i} : n-m_{0}(\lambda) - m_{1}(\lambda) < i \leq n-m_{0}(\lambda) \text{ and } \epsilon_{w}(z_{i}) = -1\}\\
N_{w, \lambda}^{0} &= \{ z_{i} : n-m_{0}(\lambda) < i \leq n \text{ and } \epsilon_{w}(z_{i}) = -1 \} \\
N_{w, \lambda} &= N_{w, \lambda}^{1}  + N_{w, \lambda}^{0}.
\end{align*}

Finally, define the following statistics of $w$:
\begin{align*}
n(w) &= |\{ (i,j) : 1 \leq i<j \leq n \text{ and } z_{j}  \prec_{w} z_{i} \}| \\
c_{\lambda}(w) &= |\{ (i,j) : 1 \leq i<j \leq n \text{ and } z_{i} \prec_{w} z_{j} \text{ and } z_{i} \in N_{w, \lambda}\}|.
\end{align*} 

Then by iterating the coefficient argument above, we get that the coefficient on $z^{\lambda + \rho}$ is given by
\begin{equation*}
\sum_{w \in P_{\lambda, n}} t^{n(w)} t^{2c_{\lambda}(w)} (-t_{0} \dots t_{3})^{|N^{1}_{w, \lambda}|} (-ab)^{|N^{0}_{w, \lambda}|}.
\end{equation*}
Since $P_{\lambda, n} = B_{m_{0}(\lambda)}B_{m_{1}(\lambda)} \prod_{i \geq 2} S_{m_{i}(\lambda)}$, it is enough to show the following three cases:
\begin{equation} \label{stat1}
\sum_{w \in S_{m}} t^{n(w)} = \prod_{j=1}^{m} \frac{1-t^{j}}{1-t}
\end{equation}
\begin{equation} \label{stat2}
\sum_{w \in B_{m}} t^{n(w)} t^{2c_{1^{m}}(w) + 2m_{0}(\lambda)} (-t_{0} \dots t_{3})^{\big|N^{1}_{w, 1^{m}} \big|} 
=  \prod_{j=1}^{m} \frac{1-t^{j}}{1-t} (1-t_{0}\cdots t_{3} t^{j-1 + 2m_{0}(\lambda)})
\end{equation}
\begin{equation} \label{stat3}
\sum_{w \in B_{m}} t^{n(w)} t^{2c_{0^{m}}(w)} (-ab)^{\big|N^{0}_{w, 0^{m}}\big|} =  \prod_{j=1}^{m} \frac{1-t^{j}}{1-t} (1-abt^{j-1}).
\end{equation}
To show (\ref{stat1}), we note that the LHS is exactly enumerated by the terms of
\begin{equation*}
(1+t+t^{2} + \cdots + t^{m-1})(1+t+t^{2} + \cdots + t^{m-2}) \cdots (1+t)(1),
\end{equation*}
which is equal to the RHS.  Also refer to \cite[Ch. III, proof of (1.2) and (1.3)]{Mac}.  We now show (\ref{stat2}); (\ref{stat3}) is analogous.  One can verify that the LHS of (\ref{stat2}) is exactly enumerated by the terms of 
\begin{equation} \label{stat2eqn}
\prod_{k=1}^{m}\Big[ \sum_{i=1}^{k} \big( t^{i-1} + t^{i-1}(t^{2})^{m_{0}(\lambda)+k-i}(-t_{0} \cdots t_{3}) \big) \Big].
\end{equation}
But we also have
\begin{multline*}
  \sum_{i=1}^{k} \Big( t^{i-1} + t^{i-1} (t^{2})^{m_{0}(\lambda) + k - i}(-t_{0} \cdots t_{3}) \Big) = \sum_{i=1}^{k} \big(t^{i-1} - t_{0}\cdots t_{3} t^{k+2m_{0}(\lambda)-1} t^{k-i}\big) \\
  = (1-t_{0} \cdots t_{3} t^{k+2m_{0}(\lambda) -1})(1 + t + \cdots t^{k-1}) 
  = (1-t_{0} \cdots t_{3} t^{k+2m_{0}(\lambda) -1}) \frac{1-t^{k}}{1-t};
 \end{multline*}
substituting this into (\ref{stat2eqn}) gives the RHS of (\ref{stat2}) as desired.

Multiplying these functions together for each distinct part $i$ of $\lambda$ (put $m = m_{i}(\lambda)$ in (\ref{stat1}), (\ref{stat2}), and (\ref{stat3}), depending on whether $i \geq 2, i=1, \text{ or } i=0$, respectively), and using (\ref{v}) shows that the coefficient on $z^{\lambda + \rho}$ in $R_{\lambda}^{(n)} \cdot \Delta_{BC}$ is indeed $v_{\lambda}(t)$, as desired.  
\end{proof}

We will now provide a direct proof of Gustafson's formula \cite{G} in the limit $q=0$.  

\begin{theorem} \label{symmeval}
We have the following constant term evaluation in the symmetric case
\begin{multline*}
\int_{T} \tilde \Delta_{K}^{(n)}(z;t;a,b,c,d) dT = \prod_{i=0}^{n-1} \frac{1}{(1-t^{i}ac)(1-t^{i}bc)(1-t^{i}cd)(1-t^{i}ad)(1-t^{i}bd)(1-t^{i}ab)}\\
\times \prod_{j=0}^{n-1} (1-t^{2n-2-j}abcd) \prod_{j=1}^{n} \frac{1-t}{1-t^{j}}.
\end{multline*}
\end{theorem}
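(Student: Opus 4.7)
My plan is to exploit the identity $K^{(n)}_{0^n}(z) = 1$, which follows from Theorem~\ref{symmtriang}: since $0^n$ is minimal in dominance order, the triangular monic expansion collapses to the single monomial $z^{0^n} = 1$. Specializing \eqref{K-pol} at $\lambda = 0^n$ then gives $\sum_{w \in B_n} R^{(n)}_{0^n, w}(z) = v_{0^n}(t;a,b;t_0,\dots,t_3)$. Integrating both sides against $\tilde\Delta_K^{(n)}(z)\,dT$ and invoking the $B_n$-invariance of $\tilde\Delta_K^{(n)}$ (so that each summand gives the same integral after $z \mapsto w^{-1}z$), one obtains
\begin{equation*}
v_{0^n}(t;a,b;t_0,\dots,t_3)\int_T \tilde\Delta_K^{(n)}\,dT = 2^n n! \int_T R^{(n)}_{0^n, e}(z)\,\tilde\Delta_K^{(n)}(z)\,dT.
\end{equation*}
Because the $K_{\lambda}$ are independent of $a,b$, I would specialize to $a = b = 0$, collapsing $v_{0^n}$ to $\prod_{j=1}^n \tfrac{1-t^j}{1-t}$ and producing the trailing factor $\prod_{j=1}^n \tfrac{1-t}{1-t^j}$ of the theorem.

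A direct cancellation then shows that $R^{(n)}_{0^n, e}(z) \cdot \tilde\Delta_K^{(n)}(z)$ reduces (up to the overall $\tfrac{1}{2^n n!}$) to the asymmetric integrand
\begin{equation*}
\Psi_n(z) := \prod_{i=1}^n \frac{1-z_i^2}{\prod_r (1-t_r z_i)(1-t_r z_i^{-1})} \prod_{i<j} \frac{(1-z_i z_j)(1-z_i z_j^{-1})}{(1-t z_i z_j)(1-t z_i z_j^{-1})},
\end{equation*}
since the factors $(1-z_i^{-1}z_j^{\pm 1})$ in the denominator of $R^{(n)}_{0^n, e}$ and $(1-z_i^{-2})$ in $u_0(z_i)$ cancel matching factors in the density.

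I would then evaluate $\int_T \Psi_n\,dT$ by iterated contour integration in the order $z_n, z_{n-1}, \dots, z_1$. Inside $|z_n| = 1$ the simple poles of the $z_n$-integrand are the four fixed poles $z_n = t_r$ (for $r = 0,1,2,3$) coming from the boundary factor, and the moving poles $z_n = tz_i$ (for $i < n$) coming from the cross factor $(1-tz_iz_n^{-1})$. The residue at each fixed pole $z_n = t_r$ yields an integrand in $z_1, \dots, z_{n-1}$ with essentially the same structural form as $\Psi_{n-1}$ (up to modification of the effective parameters), setting up a recursion; the moving-pole contributions telescope across successive iterations to supply the $\prod_{j=0}^{n-1}(1-t^{2n-2-j}t_0t_1t_2t_3)$ factor. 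An induction on $n$---with base case $n=1$ reducing to $\int_T \Psi_1\,dT = \tfrac{1-t_0t_1t_2t_3}{\prod_{0 \leq r<s \leq 3}(1-t_rt_s)}$, verifiable via the partial-fraction identity $\sum_r \tfrac{t_r^3}{\prod_{s\neq r}(t_r-t_s)} = 1$---would then yield the claimed formula.

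The main obstacle will be the bookkeeping of the moving-pole residues: unlike the fixed poles at $z_n = t_r$, they depend on the remaining integration variables and generate cross terms that propagate through successive iterations. In the spirit of the combinatorial sums over $B_n$ evaluated in the proof of Theorem~\ref{symmtriang}, one expects these contributions to combine into recognizable products, but confirming the telescoping requires nontrivial manipulations of the Pochhammer-type factors in order to match the explicit right-hand side of the theorem.
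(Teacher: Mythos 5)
Your opening moves match the paper's: use $K^{(n)}_{0^n}=1$ from Theorem \ref{symmtriang}, symmetrize to a single $B_n$-term, and reduce to an iterated contour integral evaluated by a recursion in $n$. The one structural deviation so far is your choice $a=b=0$ for the auxiliary parameters, which leaves all four density parameters in the denominator symmetrically; the paper instead matches the auxiliary parameters to two of the density parameters, so that the numerator of $u_{0}(z_i)$ cancels $(1-az_i^{-1})(1-bz_i^{-1})$ from the density, leaving only the two fixed poles $z_1=c,d$ inside the contour and a clean two-term recursion. Your variant forces a four-term recursion, which is workable in principle but costs more in the closing identity.

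The genuine gap is your order of integration. The cross factor $\frac{(1-z_iz_j)(1-z_iz_j^{-1})}{(1-tz_iz_j)(1-tz_iz_j^{-1})}$ (for $i<j$) is not symmetric in $i$ and $j$: as a function of the \emph{smaller}-index variable $z_i$ its poles sit at $z_i=(tz_j)^{-1}$ and $z_i=z_j/t$, both outside $T$, whereas as a function of the \emph{larger}-index variable $z_j$ the factor $(1-tz_iz_j^{-1})$ vanishes at $z_j=tz_i$, inside $T$. By integrating $z_n$ first you therefore pick up exactly the moving poles that the paper's order ($z_1$ first) avoids entirely. Worse, even your fixed-pole residues do not reproduce $\Psi_{n-1}$ with shifted parameters: setting $z_n=t_r$ turns the cross factor into $\frac{(1-t_rz_i)(1-t_r^{-1}z_i)}{(1-tt_rz_i)(1-tt_r^{-1}z_i)}$, and while $(1-t_r^{-1}z_i)$ is proportional to $(1-t_rz_i^{-1})$, the new denominator factor $(1-tt_r^{-1}z_i)$ vanishes at $z_i=t_r/t$, which is not of the form $(1-t_r'z_i^{\pm 1})$ with a parameter of modulus less than one, so the inductive shape is lost. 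Finally, your claim that the moving-pole contributions telescope to supply $\prod_{j}(1-t^{2n-2-j}abcd)$ is not what happens: in the paper that factor emerges from summing the \emph{fixed}-pole residues, via the identity $\frac{c(1-t^{n-1}ad)(1-t^{n-1}bd)}{c-d}+\frac{d(1-t^{n-1}ac)(1-t^{n-1}bc)}{d-c}=1-t^{2(n-1)}abcd$. Reversing your integration order to $z_1,\dots,z_n$ removes the moving poles and restores the self-similarity, after which your plan can go through; note also that your stated $n=1$ value is correct, but the residues there carry $\prod_{s\neq r}(1-t_rt_s)$ in the denominator in addition to $\prod_{s\neq r}(t_r-t_s)$, so the Lagrange-interpolation identity you cite is not quite the one needed.
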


\begin{proof}
Note first that by Theorem \ref{symmtriang}, $K_{0^{n}}^{(n)}(z;t;a,b,0,0) = 1$.  So in particular, we have
\begin{multline*}
\int_{T} \tilde \Delta_{K}^{(n)}(z;t;a,b,c,d) dT = \int_{T} K_{0^{n}}^{(n)}(z;t;a,b,0,0) \tilde \Delta_{K}^{(n)}(z;t;a,b,c,d) dT \\
= \frac{1}{v_{0^{n}}(t;a,b,0,0)} \sum_{w \in B_{n}} \int_{T} R_{0^{n},w}^{(n)}(z;t;a,b,0,0)\tilde \Delta_{K}^{(n)}(z;t;a,b,c,d) dT \\
= \frac{2^{n}n!}{v_{0^{n}}(t;a,b,0,0)} \int_{T}R_{0^{n},\text{id}}^{(n)}(z;t;a,b,0,0)\tilde \Delta_{K}^{(n)}(z;t;a,b,c,d) dT, 
\end{multline*}
where the last equality follows by symmetry of the integrand.  But now using (\ref{Kterm}), one notes that
\begin{multline*}
2^{n}n! R_{0^{n},\text{id}}^{(n)}(z;t;a,b,0,0)\tilde \Delta_{K}^{(n)}(z;t;a,b,c,d) \\= \prod_{1 \leq i \leq n} \frac{(1-z_{i}^{2})}{(1-az_{i})(1-bz_{i})(1-cz_{i})(1-dz_{i})(1-cz_{i}^{-1})(1-dz_{i}^{-1})}
 \prod_{1 \leq i<j \leq n} \frac{(1-z_{i}z_{j}^{\pm 1})}{(1-tz_{i}z_{j}^{\pm 1})}.
\end{multline*}
We will denote the right-hand side of the above equation by $\Delta_{K}^{(n)}(z;t;a,b,c,d)$.  

We will now prove that
\begin{equation} \label{nonsymmeqn}
\int_{T} \Delta_{K}^{(n)}(z;t;a,b,c,d) dT = \prod_{i=0}^{n-1} \frac{1}{(1-t^{i}ac)(1-t^{i}bc)(1-t^{i}cd)(1-t^{i}ad)(1-t^{i}bd)} \prod_{j=n-1}^{2n-2} (1-t^{j}abcd).
\end{equation}

For facility of notation, we will put $I_{n}(z;t;a,b;c,d) = \int_{T} \Delta_{K}^{(n)}(z;t;a,b,c,d) dT$.  We will prove (\ref{nonsymmeqn}) through the following two claims.

\textbf{Claim 1:} We have 
\begin{multline} \label{recurrence}
I_{n}(z;t;a,b;c,d) = \frac{c}{(1-ac)(1-bc)(1-dc)(c-d)}I_{n-1}(z;t;a,b;tc,d) \\+ \frac{d}{(1-ad)(1-bd)(1-cd)(d-c)} I_{n-1}(z;t;a,b;c,td),
\end{multline}
with initial conditions $I_{0}(z;t;a,b;c,d) = 1$ and 
\begin{equation*}
I_{1}(z;t;a,b;c,d) = \frac{1-abcd}{(1-ac)(1-bc)(1-cd)(1-ad)(1-bd)}.
\end{equation*}

To prove the first claim, we note that
\begin{multline*}
I_{n}(z;t;a,b;c,d) \\= \int \prod_{1 \leq i \leq n}\frac{z_{i}(1-z_{i}^{2})}{(1-az_{i})(1-bz_{i})(1-cz_{i})(1-dz_{i})(z_{i}-c)(z_{i}-d)} \prod_{1 \leq i<j \leq n} \frac{(z_{j}-z_{i})(1-z_{i}z_{j})}{(z_{j}-tz_{i})(1-tz_{i}z_{j})} \prod_{j=1}^{n} \frac{dz_{j}}{2\pi \sqrt{-1}}.
\end{multline*}
We may now hold the variables $z_{2}, \dots, z_{n}$ fixed and integrate with respect to $z_{1}$.  There are simple poles at $z_{1} = c$ and $z_{1} = d$, so by the Residue Theorem, it will be the sum of residues at these poles.  Consider the residue at $z_{1} = c$: 
\begin{multline*}
\int \prod_{2 \leq i \leq n}\frac{z_{i}(1-z_{i}^{2})}{(1-az_{i})(1-bz_{i})(1-cz_{i})(1-dz_{i})(z_{i}-c)(z_{i}-d)} \prod_{2 \leq i<j \leq n} \frac{(z_{j}-z_{i})(1-z_{i}z_{j})}{(z_{j}-tz_{i})(1-tz_{i}z_{j})}  \\
\times \frac{c}{(1-ac)(1-bc)(1-cd)(c-d)}  \prod_{1<j \leq n} \frac{(z_{j}-c)(1-cz_{j})}{(z_{j}-tc)(1-tcz_{j})} \prod_{j=2}^{n} \frac{dz_{j}}{2\pi \sqrt{-1}} \\
=  C_{1} \int_{T_{n-1}} \prod_{2 \leq i \leq n}\frac{z_{i}(1-z_{i}^{2})}{(1-az_{i})(1-bz_{i})(1-tcz_{i})(1-dz_{i})(z_{i}-tc)(z_{i}-d)} \prod_{2 \leq i<j \leq n} \frac{(z_{j}-z_{i})(1-z_{i}z_{j})}{(z_{j}-tz_{i})(1-tz_{i}z_{j})} dT
\end{multline*}
where $C_{1} = \frac{c}{(1-ac)(1-bc)(1-cd)(c-d)}$.  By renumbering the variables $(z_{2}, \dots, z_{n})$ by $(z_{1}, \dots, z_{n-1})$, one sees that this is exactly $C_{1} I_{n-1}(z;t;a,b;tc,d)$.  An analogous argument applies for the residue at $z_{1} = d$; this produces the second term $C_{2}I_{n-1}(z;t;a,b;c,td)$, where $C_{2} = \frac{d}{(1-ad)(1-bd)(1-cd)(d-c)}$.  

To obtain the result at $n=1$, one uses the above argument in this special case along with some algebraic manipulation.  In particular, the computation of the sum of residues is as follows
\begin{multline*}
 \frac{(1-c^{2})c}{(1-ac)(1-bc)(1-c^{2})(1-dc)(c-d)} + \frac{(1-d^{2})d}{(1-ad)(1-bd)(1-cd)(1-d^{2})(d-c)} \\  = \frac{1}{(1-ac)(1-bc)(1-cd)(1-ad)(1-bd)} \Big[ \frac{c(1-ad)(1-bd)}{c-d} + \frac{d(1-ac)(1-bc)}{d-c} \Big] \\
= \frac{1-abcd}{(1-ac)(1-bc)(1-cd)(1-ad)(1-bd)},
\end{multline*}
as desired.  This proves the first claim.

\textbf{Claim 2:} We have the following solution to (\ref{recurrence})
\begin{equation*}
I_{n}(z;t;a,b;c,d) = \prod_{i=0}^{n-1} \frac{1}{(1-t^{i}ac)(1-t^{i}bc)(1-t^{i}cd)(1-t^{i}ad)(1-t^{i}bd)} \prod_{j=n-1}^{2n-2} (1-t^{j}abcd).
\end{equation*}

We prove the second claim.  One can first check that $n=0,1$ satisfies the initial conditions of (\ref{recurrence}).  Then for $n \geq 2$, we have
\begin{multline*}
\frac{c}{(1-ac)(1-bc)(1-dc)(c-d)}I_{n-1}(z;t;a,b;tc,d) + \frac{d}{(1-ad)(1-bd)(1-cd)(d-c)} I_{n-1}(z;t;a,b;c,td) \\
= \frac{c\displaystyle \prod_{j=n-2}^{2n-4} 1-t^{j+1}abcd}{(1-ac)(1-bc)(1-dc)(c-d)}\prod_{i=0}^{n-2} \frac{1}{(1-t^{i+1}ac)(1-t^{i+1}bc)(1-t^{i+1}cd)(1-t^{i}ad)(1-t^{i}bd)} \\
+ \frac{d \displaystyle \prod_{j=n-2}^{2n-4} 1-t^{j+1}abcd }{(1-ad)(1-bd)(1-cd)(d-c)}\prod_{i=0}^{n-2} \frac{1}{(1-t^{i}ac)(1-t^{i}bc)(1-t^{i+1}cd)(1-t^{i+1}ad)(1-t^{i+1}bd)}  \\
= \Bigg[ \frac{c(1-t^{n-1}ad)(1-t^{n-1}bd)}{c-d} + \frac{d(1-t^{n-1}ac)(1-t^{n-1}bc)}{d-c} \Bigg] \\
\times \prod_{i=0}^{n-1} \frac{1}{(1-t^{i}ac)(1-t^{i}bc)(1-t^{i}cd)(1-t^{i}ad)(1-t^{i}bd)} \prod_{j=n-2}^{2n-4} (1-t^{j+1}abcd) 
\end{multline*}
But now note the following identity for the sum inside the parentheses:
\begin{multline*}
\frac{c(1-t^{n-1}ad)(1-t^{n-1}bd)}{c-d} + \frac{d(1-t^{n-1}ac)(1-t^{n-1}bc)}{d-c}  \\= \frac{c(1-t^{n-1}ad)(1-t^{n-1}bd) -d(1-t^{n-1}ac)(1-t^{n-1}bc) }{c-d} \\ = \frac{c-d+t^{2(n-1)}abcd^{2} - t^{2(n-1)}abc^{2}d}{c-d} = 1-t^{2(n-1)}abcd,
\end{multline*}
so the above finally becomes
\begin{equation*}
\prod_{i=0}^{n-1} \frac{1}{(1-t^{i}ac)(1-t^{i}bc)(1-t^{i}cd)(1-t^{i}ad)(1-t^{i}bd)} \prod_{j=n-1}^{2(n-1)} (1-t^{j}abcd) = I_{n}(z;t;a,b;c,d),
\end{equation*}
which proves (\ref{nonsymmeqn}).

Thus, putting this together we have
\begin{multline*}
\int_{T} \tilde \Delta_{K}^{(n)}(z;t;a,b,c,d) dT =  \frac{1}{v_{0^{n}}(t;a,b,0,0)} \int_{T} \Delta_{K}^{(n)}(z;t;a,b,c,d) dT \\
= \prod_{i=0}^{n-1} \frac{1}{(1-t^{i}ac)(1-t^{i}bc)(1-t^{i}cd)(1-t^{i}ad)(1-t^{i}bd)} \prod_{j=n-1}^{2n-2} (1-t^{j}abcd) \times \prod_{j=1}^{n} \frac{1-t}{1-t^{j}} \prod_{i=1}^{n} \frac{1}{1-abt^{i-1}} \\
=\prod_{i=0}^{n-1} \frac{1}{(1-t^{i}ac)(1-t^{i}bc)(1-t^{i}cd)(1-t^{i}ad)(1-t^{i}bd)(1-t^{i}ab)} \prod_{j=0}^{n-1} (1-t^{2n-2-j}abcd) \prod_{j=1}^{n} \frac{1-t}{1-t^{j}},
\end{multline*}
where we have used Theorem \ref{nonsymmeval} and (\ref{v}).

\end{proof}
We note that the quantity $\Delta_{K}^{(n)}(z;t;a,b,c,d)$ which appears in the proof of Theorem \ref{symmeval} is actually the $q=0$ limit of the nonsymmetric Koornwinder density (see \cite{RV} for example); the nonsymmetric theory is investigated in the next section.

\begin{theorem} \label{symmorthognorm}
The family of polynomials $\{K_{\lambda}^{(n)}(z;t;a,b;t_{0}, \dots, t_{3}) \}_{\lambda}$ satisfy the following orthogonality result:
\begin{align*}
\int_{T} K_{\lambda}^{(n)}(z;t;a,b; t_{0}, \dots, t_{3}) K_{\mu}^{(n)}(z;t;a,b; t_{0}, \dots, t_{3}) \tilde \Delta_{K}^{(n)}(z;t;t_{0}, \dots, t_{3}) dT &= N_{\lambda}(t;t_{0}, \dots, t_{3}) \delta_{\lambda \mu}
\end{align*}
(refer to (\ref{koorndensity}) and (\ref{norm}) for the definitions of $ \tilde \Delta_{K}^{(n)}$ and $N_{\lambda}$, respectively; also see Theorem \ref{symmeval}).
\end{theorem}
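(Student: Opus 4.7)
The proof exploits the explicit $B_n$-symmetrization defining $K_\lambda^{(n)}$ together with iterated contour integration on the torus. Since $K_\mu^{(n)}\,\tilde\Delta_K^{(n)}$ is $BC_n$-invariant, every summand in the $B_n$-sum defining $K_\lambda^{(n)}$ contributes the same integral, so
\[
\int_T K_\lambda^{(n)}\, K_\mu^{(n)}\, \tilde\Delta_K^{(n)}\,dT \;=\; \frac{2^n n!}{v_\lambda(t)} \int_T R_{\lambda,\text{id}}^{(n)}(z)\, K_\mu^{(n)}(z)\, \tilde\Delta_K^{(n)}(z)\,dT.
\]
I would then multiply $R_{\lambda,\text{id}}^{(n)}\tilde\Delta_K^{(n)}$ out and cancel the common factors $(1-z_i^{-2})$ and $(1-z_i^{-1}z_j^{\pm 1})$ shared with the density. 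The integrand (times $2^n n!$) then separates into a nonsymmetric-density-type single-variable factor for each $i$ with $\lambda_i=0$ (exactly as in the proof of Theorem~\ref{symmeval}), a factor $z_i^{\lambda_i}(1-z_i^2)/\prod_r(1-t_r z_i)$ for each $i$ with $\lambda_i>0$, and a common pair factor $\prod_{i<j}(1-z_iz_j)(1-z_iz_j^{-1})/[(1-tz_iz_j)(1-tz_iz_j^{-1})]$. The key structural observation driving the rest of the argument is that for each $i$ with $\lambda_i>0$ this expression, viewed as a rational function of $z_i$ with the remaining variables on the unit torus, is holomorphic on the closed unit disk --- all its $z_i$-poles lie at $\{1/t_r\}\cup\{t^{-1}z_j^{\pm 1}\}$, which have modulus strictly greater than one.

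For the orthogonality assertion ($\lambda\neq\mu$), I would assume without loss of generality $\mu \stackrel{\text{lex}}{<} \lambda$ (possible since $\stackrel{\text{lex}}{\leq}$ is a total order). By Theorem~\ref{symmtriang}, $K_\mu^{(n)}$ expands in Laurent monomials $z^\beta$ whose dominant weight satisfies $\beta^+\leq\mu$ in dominance, so it suffices to verify $\int_T R_{\lambda,\text{id}}^{(n)}\, z^\beta\, \tilde\Delta_K^{(n)}\,dT = 0$ for each such $\beta$. Iterated contour integration in $z_1, z_2, \dots, z_{n-m_0(\lambda)}$ (the positive-part coordinates of $\lambda$), using the holomorphicity just established, produces the chain of conditions $\sum_{i=1}^{k}(-\beta_i) \geq \sum_{i=1}^k \lambda_i$ for each $k\leq n-m_0(\lambda)$. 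Since the left side is maximized over the $B_n$-orbit of $\beta^+$ (simultaneously in $k$) by the choice $\beta_i = -\beta^+_i$, nonvanishing forces $\beta^+\geq\lambda$ in dominance on the first $n-m_0(\lambda)$ parts. Combined with $\beta^+\leq\mu$ in dominance, this gives $\lambda\leq\mu$ in dominance on the same range; at the first index where $\lambda$ and $\mu$ disagree (which must lie in this range), the lex hypothesis yields $\lambda_k>\mu_k$, contradicting the dominance inequality. Hence the integral vanishes.

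For the norm evaluation ($\lambda=\mu$), the same iterated residue procedure retains only the leading monomial $z^\lambda$ of $K_\lambda^{(n)}$, whose coefficient is one by Theorem~\ref{symmtriang}. After integrating out the positive-part variables, the remaining integral in the $m_0(\lambda)$ zero-part variables has exactly the form evaluated in Theorem~\ref{symmeval}. Using the factorization $v_\lambda = v_{\lambda+}\, v_{0^{m_0(\lambda)}}$ displayed just after~(\ref{vplus}), the resulting constant matches $N_\lambda(t;t_0,\dots,t_3)$ as defined in~(\ref{norm}).

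The main obstacle is the detailed iterated residue bookkeeping: the $B_n$-symmetric polynomial $K_\mu^{(n)}$ contains monomials over the entire orbit of each partition $\nu\leq\mu$ with many sign patterns, and one must carefully track the partial-sum constraints produced by successive Haar integrations --- which mix contributions from $z_i^{\lambda_i}$, the single-variable factors, and the Taylor expansions in $z_i$ of the pair factors --- to confirm they cannot simultaneously be satisfied in the non-diagonal case. The same bookkeeping, carried out for $\lambda=\mu$, must isolate the leading contribution without spurious cross-terms in order to reduce the residual integral precisely to the one evaluated in Theorem~\ref{symmeval}.
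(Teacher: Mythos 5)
Your proposal is correct, and it reaches the theorem by a genuinely different route from the paper's. The paper expands \emph{both} polynomials over $B_n$, reduces one factor to $R_{\lambda,\text{id}}^{(n)}$, and then proves a term-by-term Claim evaluating $\int_T R_{\lambda,\text{id}}^{(n)}R_{\mu,\rho}^{(n)}\tilde\Delta_K^{(n)}\,dT$ for each $\rho\in B_n$ by an explicit residue in $z_1$; vanishing for $\lambda\neq\mu$ falls out of the resulting case analysis (one needs $\mu_i=\lambda_1$ with prescribed signs), and the norm is obtained by summing the explicit $t$-statistics over the contributing set $B_{\lambda,n}$ and recognizing Poincar\'e-type products. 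You instead keep $K_\mu^{(n)}$ intact, use Theorem \ref{symmtriang} to reduce to monomials $z^\beta$ with $\beta^+\le\mu$, and replace the explicit residue evaluations by a holomorphicity/degree argument yielding the partial-sum constraints $\sum_{i\le k}(-\beta_i)\ge\sum_{i\le k}\lambda_i$, contradicted via dominance versus lex order. This buys a vanishing proof with no explicit constants and a norm computation needing only one monomial coefficient rather than a sum over $B_{\lambda,n}$; the bookkeeping you flag does close up, since the coefficient of $z_1^k$ in the holomorphic factor contributes only monomials $\prod_{j>1}z_j^{m_j}$ with $\sum_j|m_j|\le k$, and this bound propagates through the iteration.

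One correction and one completion. In the norm step the surviving monomial of $K_\lambda^{(n)}$ is $z^{-\lambda}$, not $z^{\lambda}$: your own constraints force $\beta_i=-\lambda_i$ on the positive parts, and $\beta^+=\lambda$ then forces the remaining entries to vanish; note that $\int_T R_{\lambda,\text{id}}^{(n)}z^{\lambda}\tilde\Delta_K^{(n)}\,dT=0$ for $\lambda\neq 0$, since that integrand is $\prod_i z_i^{2\lambda_i}$ times a function holomorphic on the closed polydisk. The coefficient of $z^{-\lambda}$ is still $1$, by monicity (Theorem \ref{symmtriang}) together with invariance under $z\mapsto z^{-1}$ (Theorem \ref{BCpol}), so nothing downstream changes. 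Finally, after the positive-part variables are integrated out at $\beta=-\lambda$ (each such integration evaluates the holomorphic factor at $z_i=0$, giving $1$), the residual integral is the \emph{nonsymmetric} constant term $\int_T\Delta_K^{(m_0(\lambda))}\,dT$ of Theorem \ref{nonsymmeval} (after choosing $a=t_0$, $b=t_1$ in $u_\lambda$, legitimate since $K_\lambda^{(n)}$ is independent of $a,b$); converting to $\int_T\tilde\Delta_K^{(m_0(\lambda))}\,dT$ costs exactly the factor $v_{0^{m_0(\lambda)}}$, which together with $v_\lambda=v_{\lambda+}v_{0^{m_0(\lambda)}}$ yields $N_\lambda$ as defined in (\ref{norm}).
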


\begin{proof}
By symmetry of $\lambda, \mu$, we may restrict to the case where $\lambda \stackrel{\text{lex}}{\geq} \mu$.  We assume $\lambda_{1} > 0$, so we need not consider the case $\lambda = \mu = 0^{n}$; these assumptions hold throughout the proof.  By definition of $K_{\lambda}^{(n)}(z;t;a,b;t_{0}, \dots, t_{3})$ as a sum over $B_{n}$, the above integral is equal to
\begin{equation*}
\sum_{w,\rho \in B_{n}} \int_{T} K_{\lambda, w}^{(n)}(z;t;a,b;t_{0}, \dots, t_{3}) K_{\mu, \rho}^{(n)}(z;t;a,b;t_{0}, \dots, t_{3}) \tilde \Delta_{K}^{(n)}(z;t;t_{0}, \dots, t_{3}) dT.
\end{equation*}
Consider an arbitrary term in this sum over $B_{n} \times B_{n}$ indexed by $(w, \rho)$.  Note that using a change of variables in the integral and inverting variables (which preserves the integral), we may assume $w$ is the identity permutation, and all sign choices are $1$ (and $\rho$ is arbitrary).  That is, we have:
\begin{multline*}
\int_{T} K_{\lambda}(z_{1}, \dots, z_{n};t;a,b; t_{0}, \dots, t_{3}) K_{\mu}(z_{1}, \dots, z_{n};t;a,b; t_{0}, \dots, t_{3}) \tilde \Delta_{K}^{(n)}(z;t;t_{0}, \dots, t_{3}) dT  \\
= 2^{n}n! \sum_{\rho \in B_{n}} \int_{T} K_{\lambda, \text{id}}^{(n)}(z;t;a,b;t_{0}, \dots, t_{3}) K_{\mu, \rho}^{(n)}(z;t;a,b;t_{0}, \dots, t_{3}) \tilde \Delta_{K}^{(n)}(z;t;t_{0}, \dots, t_{3}) dT \\
= 2^{n}n! \frac{1}{v_{\lambda}(t)v_{\mu}(t)}\sum_{\rho \in B_{n}} \int_{T} R_{\lambda, \text{id}}^{(n)}(z;t;a,b;t_{0}, \dots, t_{3}) R_{\mu, \rho}^{(n)}(z;t;a,b;t_{0}, \dots, t_{3}) \tilde \Delta_{K}^{(n)}(z;t;t_{0}, \dots, t_{3}) dT,
\end{multline*}
where $R_{\lambda}^{(n)}$ is as defined in (\ref{R-pol}).  

We study an arbitrary term in this sum.  In particular, we give an iterative formula that shows that each of these terms vanishes unless $\lambda = \mu$.  
\begin{claim} Fix an arbitrary $\rho \in B_{n}$ and let $\rho(i) = 1$ for some $1 \leq i \leq n$.  Then we have the following formula:
\begin{multline*}
2^{n}n!\int_{T} R_{\lambda, \text{id}}^{(n)}(z;t;a,b;t_{0}, \dots, t_{3}) R_{\mu, \rho}^{(n)}(z;t;a,b;t_{0}, \dots, t_{3}) \tilde \Delta_{K}^{(n)} dT \\
=\begin{cases} 
t^{i-1} 2^{n-1}(n-1)! \int R_{\widehat{\lambda},  \widehat{\text{id}}}^{(n-1)} R_{\widehat{\mu}, \widehat{\rho}}^{(n-1)} \tilde \Delta_{K}^{(n-1)} dT & \text{if $\mu_{i} = \lambda_{1}$ and $\epsilon_{\rho}(z_{1}) = -1$,}\\
t^{i-1}(t^{2})^{m_{0}(\mu)+m_{1}(\mu)-i}(-t_{0} \cdots t_{3})  2^{n-1}(n-1)!\int R_{\widehat{\lambda}, \widehat{\text{id}}}^{(n-1)} R_{\widehat{\mu}, \widehat{\rho}}^{(n-1)} \tilde \Delta_{K}^{(n-1)} dT & \textit{if $\mu_{i} = \lambda_{1}=1$} \\ &\text{ and $\epsilon_{\rho}(z_{1}) = 1$}, \\ 
0 & \textit{otherwise.}\\
\end{cases}
\end{multline*}
where $\widehat{\lambda}$ and $\widehat{\mu}$ are the partitions $\lambda$ and $\mu$ with parts $\lambda_{1}$ and $\mu_{i}$ deleted (respectively), and $\widehat{\text{id}}$ and $\widehat{\rho}$ are the permutations $\text{id}$ and $\rho$ with $z_{1}$ deleted (respectively) and signs preserved.
\end{claim}

To prove the claim, we integrate with respect to $z_{1}$ in the iterated integral, using the definition of $R^{(n)}_{\lambda, \text{id}}, R^{(n)}_{\mu, \rho}$ and $\tilde \Delta_{K}^{(n)}$.  

First suppose $\mu_{i} > 0$.  The univariate terms in $z_{1}$ are:
\begin{align*}
& z_{1}^{\lambda_{1}} \frac{(1-t_{0}z_{1}^{-1}) \cdots (1-t_{3}z_{1}^{-1})}{(1-z_{1}^{-2})} z_{1}^{\mu_{i}} \frac{(1-t_{0}z_{1}^{-1}) \cdots (1-t_{3}z_{1}^{-1})}{(1-z_{1}^{-2})} \frac{(1-z_{1}^{\pm 2})}{(1-t_{0}z_{1}^{\pm 1}) \cdots (1-t_{3}z_{1}^{\pm 1})} \\
&= z_{1}^{\lambda_{1} + \mu_{i}} \frac{(-z_{1}^{2})(1-t_{0}z_{1}^{-1}) \cdots (1-t_{3}z_{1}^{-1})}{(1-t_{0}z_{1}) \cdots (1-t_{3}z_{1})}
\end{align*}
if $\epsilon_{\rho}(z_{1}) = 1$, and
\begin{align*}
& z_{1}^{\lambda_{1}} \frac{(1-t_{0}z_{1}^{-1}) \cdots (1-t_{3}z_{1}^{-1})}{(1-z_{1}^{-2})} z_{1}^{-\mu_{i}} \frac{(1-t_{0}z_{1}) \cdots (1-t_{3}z_{1})}{(1-z_{1}^{2})} \frac{(1-z_{1}^{\pm 2})}{(1-t_{0}z_{1}^{\pm 1}) \cdots (1-t_{3}z_{1}^{\pm 1})} \\
&= z_{1}^{\lambda_{1} - \mu_{i}} 
\end{align*}
if $\epsilon_{\rho}(z_{1}) = -1$.

Now suppose $\mu_{i} = 0$.  The univariate terms in $z_{1}$ are:
\begin{align*}
& z_{1}^{\lambda_{1}} \frac{(1-t_{0}z_{1}^{-1}) \cdots (1-t_{3}z_{1}^{-1})}{(1-z_{1}^{-2})} \frac{(1-az_{1}^{-1})(1-bz_{1}^{-1})}{(1-z_{1}^{-2})} \frac{(1-z_{1}^{\pm 2})}{(1-t_{0}z_{1}^{\pm 1}) \cdots (1-t_{3}z_{1}^{\pm 1})} \\
&= z_{1}^{\lambda_{1}} \frac{(-z_{1}^{2})(1-az_{1}^{-1})(1-bz_{1}^{-1})}{(1-t_{0}z_{1}) \cdots (1-t_{3}z_{1})}
\end{align*}
if $\epsilon_{\rho}(z_{1}) = 1$, and
\begin{align*}
& z_{1}^{\lambda_{1}} \frac{(1-t_{0}z_{1}^{-1}) \cdots (1-t_{3}z_{1}^{-1})}{(1-z_{1}^{-2})} \frac{(1-az_{1})(1-bz_{1})}{(1-z_{1}^{2})} \frac{(1-z_{1}^{\pm 2})}{(1-t_{0}z_{1}^{\pm 1}) \cdots (1-t_{3}z_{1}^{\pm 1})} \\
&= z_{1}^{\lambda_{1}} \frac{(1-az_{1})(1-bz_{1})}{(1-t_{0}z_{1}) \cdots (1-t_{3}z_{1})}
\end{align*}
if $\epsilon_{\rho}(z_{1}) = -1$.  

Notice that for the cross terms in $z_{1}$ (those involving $z_{j}$ for $j \neq 1$), we have
\begin{align*}
\prod_{j>1} \frac{1-tz_{1}^{-1}z_{j}^{-1}}{1-z_{1}^{-1}z_{j}^{-1}} \frac{1-tz_{1}^{-1}z_{j}}{1-z_{1}^{-1}z_{j}} \times \prod_{j>1} \frac{1-z_{1}^{\pm 1}z_{j}^{\pm 1}}{1-tz_{1}^{\pm 1}z_{j}^{\pm 1}}
\end{align*}
from the corresponding terms in $z_{1}$ of $R_{\lambda, \text{id}}$ and the density.  Combining this with the cross terms of $R_{\mu, \rho}$ in $z_{1}$ (and taking into account the various sign possibilities for $\rho$), we obtain
\begin{align*}
\prod_{\substack{z_{i} \prec_{\rho} z_{1} \\ \text{ sign } 1 \text{ for } z_{i}}} \frac{t-z_{1}z_{i}}{1-tz_{1}z_{i}} \prod_{\substack{z_{i} \prec_{\rho} z_{1} \\ \text{ sign } -1 \text{ for } z_{i}}} \frac{t-z_{1}z_{i}^{-1}}{1-tz_{1}z_{i}^{-1}} \prod_{z_{1} \prec_{\rho} z_{j}} \frac{(t-z_{1}z_{j}^{-1})(t-z_{1}z_{j})}{(1-tz_{1}z_{j}^{-1})(1-tz_{1}z_{j})}
\end{align*}
if $\epsilon_{\rho}(z_{1}) = 1$, and
\begin{align*}
\prod_{\substack{z_{i} \prec_{\rho} z_{1} \\ \text{ sign } 1 \text{ for } z_{i}}} \frac{t-z_{1}z_{i}}{1-tz_{1}z_{i}} \prod_{\substack{z_{i} \prec_{\rho} z_{1} \\ \text{ sign } -1 \text{ for } z_{i}}} \frac{t-z_{1}z_{i}^{-1}}{1-tz_{1}z_{i}^{-1}}
\end{align*}
if $\epsilon_{\rho}(z_{1}) = -1$.  

Thus, combining these computations, the integral in $z_{1}$ is:
\begin{multline*}
\begin{cases}
\int_{T_{1}} z_{1}^{\lambda_{1} + \mu_{i}} \frac{(-z_{1}^{2})(1-t_{0}z_{1}^{-1}) \cdots (1-t_{3}z_{1}^{-1})}{(1-t_{0}z_{1}) \cdots (1-t_{3}z_{1})} \cdot \\ \displaystyle \prod_{\substack{ z_{k} \prec_{\rho} z_{1} \\ \epsilon_{\rho}(z_{k}) = 1}} \frac{t-z_{1}z_{k}}{1-tz_{1}z_{k}} \prod_{\substack{z_{k} \prec_{\rho} z_{1} \\ \epsilon_{\rho}(z_{k})=-1}} \frac{t-z_{1}z_{k}^{-1}}{1-tz_{1}z_{k}^{-1}} \prod_{z_{1} \prec_{\rho} z_{j}} \frac{(t-z_{1}z_{j}^{-1})(t-z_{1}z_{j})}{(1-tz_{1}z_{j}^{-1})(1-tz_{1}z_{j})} dT & \text{if $\mu_{i} > 0$ and $\epsilon_{\rho}(z_{1}) = 1$,}
\\
 \int_{T_{1}} z_{1}^{\lambda_{1}} \frac{(-z_{1}^{2})(1-az_{1}^{-1})(1-bz_{1}^{-1})}{(1-t_{0}z_{1}) \cdots (1-t_{3}z_{1})} \cdot \\ \displaystyle \prod_{\substack{z_{k} \prec_{\rho} z_{1} \\ \epsilon_{\rho}(z_{k}) = 1}} \frac{t-z_{1}z_{k}}{1-tz_{1}z_{k}} \prod_{\substack{z_{k} \prec_{\rho} z_{1} \\ \epsilon_{\rho}(z_{k}) = -1}} \frac{t-z_{1}z_{k}^{-1}}{1-tz_{1}z_{k}^{-1}} \prod_{z_{1} \prec_{\rho} z_{j}} \frac{(t-z_{1}z_{j}^{-1})(t-z_{1}z_{j})}{(1-tz_{1}z_{j}^{-1})(1-tz_{1}z_{j})} dT & \text{if $\mu_{i} = 0$ and $\epsilon_{\rho}(z_{1}) = 1$,} 
 \\
 \int_{T_{1}} z_{1}^{\lambda_{1} - \mu_{i}} \displaystyle \prod_{\substack{z_{k} \prec_{\rho} z_{1} \\ \epsilon_{\rho}(z_{k}) = 1 }} \frac{t-z_{1}z_{k}}{1-tz_{1}z_{k}} \prod_{\substack{z_{k} \prec_{\rho} z_{1} \\ \epsilon_{\rho}(z_{k})=-1}} \frac{t-z_{1}z_{k}^{-1}}{1-tz_{1}z_{k}^{-1}} dT & \text{if $\mu_{i} > 0$ and $\epsilon_{\rho}(z_{1}) = -1$,}
\\
\int_{T_{1}} z_{1}^{\lambda_{1}} \frac{(1-az_{1})(1-bz_{1})}{(1-t_{0}z_{1}) \cdots (1-t_{3}z_{1})}\displaystyle \prod_{\substack{z_{k} \prec_{\rho} z_{1} \\ \epsilon_{\rho}(z_{k}) = 1}} \frac{t-z_{1}z_{k}}{1-tz_{1}z_{k}} \prod_{\substack{z_{k} \prec_{\rho} z_{1} \\ \epsilon_{\rho}(z_{k}) = -1}} \frac{t-z_{1}z_{k}^{-1}}{1-tz_{1}z_{k}^{-1}} dT&\text{if $\mu_{i} = 0$ and $\epsilon_{\rho}(z_{1}) = -1$.}
\end{cases}
\end{multline*}
In particular, the first integral vanishes unless $\lambda_{1} = \mu_{i} = 1$; the second integral always vanishes; the third integral vanishes unless $\lambda_{1} = \mu_{i}$; the fourth integral always vanishes.  Thus, we obtain the vanishing conditions of the claim.  To obtain the nonzero values, one can use the residue theorem and evaluate at the simple pole $z_{1} = 0$ in the cases $\lambda_{1} = \mu_{i} = 1$ and $\lambda_{1} = \mu_{i}$.  Finally, we combine with the original integrand involving terms in $z_{2}, \dots, z_{n}$ to obtain the result of the claim.

Note that in particular the claim implies that if $\lambda \neq \mu$, each term vanishes and consequently the total integral is zero.  This proves the vanishing part of the orthogonality statement.  

Next, we compute the norm when $\lambda = \mu$.  The claim shows that only certain $\rho \in B_{n}$ give nonvanishing term integrals.  Such permutations must satisfy
\begin{align*}
z_{1}^{\lambda_{1}} \cdots z_{n}^{\lambda_{n}} z_{\rho(1)}^{-\lambda_{1}} \cdots z_{\rho(n)}^{-\lambda_{n}} = 1
\end{align*}
and $\epsilon_{\rho}(z_{i}) = -1$ for all $1 \leq i \leq n - m_{0}(\lambda) - m_{1}(\lambda)$.  For simplicity of notation, define $B_{\lambda,n}$ to be the set of such permutations $\rho \in B_{n}$.  Then we have:
\begin{multline*}
\int_{T} K_{\lambda}^{(n)}(z;t;a,b;t_{0}, \dots, t_{3}) K_{\lambda}^{(n)}(z;t;a,b;t_{0}, \dots, t_{3}) \tilde \Delta_{K}^{(n)} dT = \frac{2^{n}n!}{v_{\lambda}(t)^{2}} \sum_{\rho \in B_{n}} \int_{T} R_{\lambda, \text{id}}^{(n)} R_{\lambda, \rho}^{(n)} \tilde \Delta_{K}^{(n)} dT \\
=\frac{2^{n}n!}{v_{\lambda}(t)^{2}} \sum_{\rho \in B_{\lambda,n}} \int_{T} R_{\lambda, \text{id}}^{(n)} R_{\lambda, \rho}^{(n)} \tilde \Delta_{K}^{(n)} dT,
\end{multline*}
since only these permutations give nonvanishing terms.

Then, using the formula of the Claim, we have
\begin{multline*}
2^{n}n! \sum_{\rho \in B_{\lambda,n}} \int_{T} R_{\lambda, \text{id}}^{(n)} R_{\lambda, \rho}^{(n)} \tilde \Delta_{K}^{(n)} dT \\
= \begin{cases} C_{1} \times 2^{n-m_{\lambda_{1}}(\lambda)} (n-m_{\lambda_{1}}(\lambda))! \displaystyle \sum_{\rho \in B_{\tilde \lambda, n-m_{\lambda_{1}}(\lambda)}} \int_{T} R_{\tilde \lambda, \text{id}}^{(n-m_{\lambda_{1}}(\lambda))} R_{\tilde \lambda, \rho}^{(n-m_{\lambda_{1}}(\lambda))} \tilde \Delta_{K}^{(n-m_{\lambda_{1}}(\lambda))}dT &\text{ if $\lambda_{1}>1$, }\\
C_{2} \times 2^{n-m_{\lambda_{1}}(\lambda)} (n-m_{\lambda_{1}}(\lambda))! \displaystyle \sum_{\rho \in B_{\tilde \lambda, n-m_{\lambda_{1}}(\lambda)}} \int_{T} R_{\tilde \lambda, \text{id}}^{(n-m_{\lambda_{1}}(\lambda))} R_{\tilde \lambda, \rho}^{(n-m_{\lambda_{1}}(\lambda))} \tilde \Delta_{K}^{(n-m_{\lambda_{1}}(\lambda))}dT &\text{ if $\lambda_{1} = 1$,}
\end{cases}
\end{multline*}
where 
\begin{align*}
C_{1} &= \prod_{k=1}^{m_{\lambda_{1}}(\lambda)} \Big( \sum_{i=1}^{k} t^{i-1}\Big) \\
C_{2} &= \prod_{k=1}^{m_{1}(\lambda)} \Big[ \sum_{i=1}^{k} \big(t^{i-1} + t^{i-1}(t^{2})^{m_{0}(\lambda)+k-i}(-t_{0}t_{1}t_{2}t_{3})\big)\Big]
\end{align*}
and $\tilde \lambda$ is the partition $\lambda$ with all $m_{\lambda_{1}}(\lambda)$ occurrences of $\lambda_{1}$ deleted.  Iterating this argument gives that
\begin{multline*}
2^{n}n! \sum_{\rho \in B_{\lambda,n}} \int_{T} R_{\lambda, \text{id}}^{(n)} R_{\lambda, \rho}^{(n)} \tilde \Delta_{K}^{(n)} dT \\
= \Bigg( \prod_{j>1} \prod_{k=1}^{m_{j}(\lambda)}\Big( \sum_{i=1}^{k} t^{i-1}
 \Big) \Bigg) \Bigg( \prod_{k=1}^{m_{1}(\lambda)} \sum_{i=1}^{k} \Big( t^{i-1} + t^{i-1} (t^{2})^{m_{0}(\lambda) + k - i}(-t_{0} \cdots t_{3}) \Big) \Bigg)\\ 
 \times 2^{m_{0}(\lambda)} m_{0}(\lambda)! \sum_{\rho \in B_{m_{0}(\lambda)}} \int_{T}R_{0^{m_{0}(\lambda)},\text{id}}^{(m_{0}(\lambda))} R_{0^{m_{0}(\lambda)}, \rho}^{(m_{0}(\lambda))} \tilde \Delta_{K}^{(m_{0}(\lambda))} dT;
\end{multline*}
note that the expression on the final line is exactly $\int_{T} {R_{0^{m_{0}(\lambda)}}^{(m_{0}(\lambda))}}^{2} \tilde \Delta_{K}^{(m_{0}(\lambda))}dT$.  

Thus,
\begin{multline*}
\frac{2^{n}n!}{v_{\lambda}(t)^{2}} \sum_{\rho \in B_{\lambda,n}} \int_{T} R_{\lambda, \text{id}}^{(n)} R_{\lambda, \rho}^{(n)} \tilde \Delta_{K}^{(n)} dT\\
 = \frac{1}{v_{\lambda+}(t)^{2}}\Bigg( \prod_{j>1} \prod_{k=1}^{m_{j}(\lambda)}\Big( \sum_{i=1}^{k} t^{i-1}
 \Big) \Bigg) \Bigg( \prod_{k=1}^{m_{1}(\lambda)} \sum_{i=1}^{k} \Big( t^{i-1} + t^{i-1} (t^{2})^{m_{0}(\lambda) + k - i}(-t_{0} \cdots t_{3}) \Big) \Bigg) \\
\times \frac{1}{v_{0^{m_{0}(\lambda)}}(t)^{2}} \int_{T} {R_{0^{m_{0}(\lambda)}}^{(m_{0}(\lambda))}}^{2} \tilde \Delta_{K}^{(m_{0}(\lambda))} dT,
 \end{multline*}
since by (\ref{v}) and (\ref{vplus}) we have $v_{\lambda+}(t) \cdot v_{0^{m_{0}(\lambda)}}(t) = v_{\lambda}(t)$.
 Now using
 \begin{align*}
 \prod_{k=1}^{m_{j}(\lambda)} \Big( \sum_{i=1}^{k} t^{i-1} \Big) &= \prod_{k=1}^{m_{j}(\lambda)} \frac{1-t^{k}}{1-t}
 \end{align*}
 and
 \begin{align*}
  \sum_{i=1}^{k} \Big( t^{i-1} + t^{i-1} (t^{2})^{m_{0}(\lambda) + k - i}(-t_{0} \cdots t_{3}) \Big) &= \sum_{i=1}^{k} \big(t^{i-1} - t_{0}\cdots t_{3} t^{k+2m_{0}(\lambda)-1} t^{k-i}\big) \\
  &= (1-t_{0} \cdots t_{3} t^{k+2m_{0}(\lambda) -1})(1 + t + \cdots t^{k-1}) \\
  &= (1-t_{0} \cdots t_{3} t^{k+2m_{0}(\lambda) -1}) \frac{1-t^{k}}{1-t},
 \end{align*}
the above expression can be simplified to 
\begin{multline*}
 \frac{1}{v_{\lambda+}(t)^{2}}\Big(\prod_{j \geq 1} \prod_{k=1}^{m_{j}(\lambda)} \frac{1-t^{k}}{1-t} \Big) \prod_{k=1}^{m_{1}(\lambda)} (1-t_{0} \cdots t_{3} t^{k+2m_{0}(\lambda)-1}) \int_{T} {K_{0^{m_{0}(\lambda)}}^{(m_{0}(\lambda))}}^{2} \tilde \Delta_{K}^{(m_{0}(\lambda))} dT 
= \frac{1}{v_{\lambda+}(t)} \int_{T} \tilde \Delta_{K}^{(m_{0}(\lambda))} dT\\
= N_{\lambda}(t;t_{0}, \dots, t_{3})
\end{multline*}
since $K_{0^{m_{0}(\lambda)}}^{(m_{0}(\lambda))} = 1$, by Theorem \ref{symmtriang}.  Note that, by Theorem \ref{symmeval}, there is an explicit evaluation for this norm.  
\end{proof}

\subsection{Application}
In this section, we use the closed formula (\ref{K-pol}) for the Koornwinder polynomials at $q=0$ to prove a result from \cite{RV} in this special case.  The idea is a type $BC$ adaptation of that used in \cite{VV}: we use the structure of $K_{\lambda}^{(n)}$ as a sum over the Weyl group and the symmetry of the integral to restrict to one particular term.  We obtain an explicit formula for the integral of this particular term by integrating with respect to one variable (holding the others fixed) and then proceed by induction.

\begin{theorem} \cite[Theorem 4.10]{RV} For partitions $\lambda$ with $l(\lambda) \leq n$, the integral 
\begin{equation*}
\int_{T} K_{\lambda}(z_{1}, \dots, z_{n} ; t^{2}; a,b;a,b,ta,tb) \tilde \Delta_{K}^{(n)}(z;t; \pm \sqrt{t},a,b) dT .
\end{equation*}
vanishes if $\lambda$ is not an even partition.  If $\lambda$ is an even partition, the integral is equal to
\begin{equation*}
\frac{(\sqrt{t})^{|\lambda|}}{(1+t)^{l(\lambda)}} \frac{N_{\lambda}(t; \pm \sqrt{t},a,b) v_{\lambda+}(t; \pm \sqrt{t},a,b)}{v_{\lambda+}(t^{2};a,b,ta,tb)}.\end{equation*}
\end{theorem}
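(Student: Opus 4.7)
The plan is to adapt the residue technique of Theorems~\ref{symmeval} and~\ref{symmorthognorm}: use the $B_n$-invariance of $\tilde\Delta_K^{(n)}(z;t;\pm\sqrt{t},a,b)$ to collapse the sum over the Weyl group in $K_\lambda$ to a single term, then integrate one variable at a time by the Residue Theorem. Since each term in the $B_n$-sum contributes the same integral after a change of variables, the initial reduction is
\begin{equation*}
\int_T K_\lambda\,\tilde\Delta_K^{(n)}\,dT = \frac{2^n n!}{v_\lambda(t^2;a,b;a,b,ta,tb)} \int_T R_{\lambda,\mathrm{id}}^{(n)}(z;t^2;a,b;a,b,ta,tb)\,\tilde\Delta_K^{(n)}(z;t;\pm\sqrt{t},a,b)\,dT.
\end{equation*}

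The heart of the proof is the iterated contour integration. After combining the $z_1$-univariate factors of $R_{\lambda,\mathrm{id}}$ and $\tilde\Delta_K^{(n)}$, the factors $(1-az_1^{\pm 1})(1-bz_1^{\pm 1})$ in the density cancel against those in $u_\lambda(z_1)$, leaving (for $\lambda_1>0$)
\begin{equation*}
z_1^{\lambda_1}\,\frac{(1-z_1^2)(1-taz_1^{-1})(1-tbz_1^{-1})}{(1-tz_1^2)(1-tz_1^{-2})(1-az_1)(1-bz_1)}
\end{equation*}
together with the cross-terms in $z_1$ and $z_j$ from both $R_{\lambda,\mathrm{id}}$ and $\tilde\Delta_K^{(n)}$. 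Under the modulus assumptions, the only poles inside the unit disk are $z_1=\pm\sqrt{t}$, arising from $(1-tz_1^{-2})$. The key simplification is that at $z_1^2=t$ one has $(1-taz_1^{-1})=(1-az_1)$ (and similarly for $b$), so the univariate part collapses to $(\sqrt{t})^{\lambda_1}(1-t)/(1-t^2)=(\sqrt{t})^{\lambda_1}/(1+t)$; a parallel cancellation, after clearing the common $(1-z_1^{-1}z_j^{\pm 1})$ factors, shows that the combined cross-term
\begin{equation*}
\frac{(1-t^2 z_1^{-1}z_j^{\pm 1})(1-z_1 z_j^{\pm 1})}{(1-tz_1^{\pm 1}z_j^{\pm 1})}
\end{equation*}
evaluates to $1$ at $z_1=\pm\sqrt{t}$ as a function of $z_j$. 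Hence the residue sum is $(\sqrt{t})^{\lambda_1}(1+(-1)^{\lambda_1})/(2(1+t))$, which vanishes when $\lambda_1$ is odd and equals $(\sqrt{t})^{\lambda_1}/(1+t)$ when $\lambda_1$ is even. This is precisely the source of the vanishing for non-even $\lambda$.

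Iterating across the $l(\lambda)$ nonzero parts of $\lambda$, and tracking the combinatorial factor $1/(2n)$ that arises at each step from the splitting of the $1/(2^n n!)$ prefactor of $\tilde\Delta_K^{(n)}$ into $1/(2^{n-1}(n-1)!)$ for $\tilde\Delta_K^{(n-1)}$, one obtains for even $\lambda$
\begin{equation*}
\int_{T_n} R_{\lambda,\mathrm{id}}^{(n)}\,\tilde\Delta_K^{(n)}\,dT = \frac{2^{m_0(\lambda)}m_0(\lambda)!}{2^n n!}\,\frac{(\sqrt{t})^{|\lambda|}}{(1+t)^{l(\lambda)}}\,\int_{T_{m_0(\lambda)}} R_{0^{m_0(\lambda)},\mathrm{id}}^{(m_0(\lambda))}\,\tilde\Delta_K^{(m_0(\lambda))}\,dT.
\end{equation*}
Applying Theorem~\ref{symmtriang} ($K_{0^{m_0(\lambda)}}^{(m_0(\lambda))}=1$) together with $B_{m_0(\lambda)}$-symmetry exactly as in the proof of Theorem~\ref{symmorthognorm}, the residual integral equals $v_{0^{m_0(\lambda)}}(t^2;a,b;a,b,ta,tb)/(2^{m_0(\lambda)}m_0(\lambda)!)$ times $\int_{T_{m_0(\lambda)}}\tilde\Delta_K^{(m_0(\lambda))}\,dT = N_\lambda\, v_{\lambda+}(t;\pm\sqrt{t},a,b)$. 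Assembling these factors and using $v_\lambda=v_{\lambda+}\cdot v_{0^{m_0(\lambda)}}$, so that $v_{0^{m_0(\lambda)}}/v_\lambda=1/v_{\lambda+}(t^2;a,b,ta,tb)$, delivers the claimed formula.

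The main obstacle is the cross-term cancellation at $z_1=\pm\sqrt{t}$; once established, the iteration factorizes cleanly and the rest is essentially bookkeeping. I would verify the cancellation by expanding both the combined numerator and denominator over the four sign choices in $z_1^{\pm 1}z_j^{\pm 1}$ and matching corresponding factors after substitution, in the same spirit as the residue manipulations in Theorems~\ref{symmeval} and~\ref{symmorthognorm}.
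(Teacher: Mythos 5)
Your overall architecture matches the paper's: reduce by $B_n$-symmetry to the single term $R_{\lambda,\mathrm{id}}^{(n)}$, integrate in $z_1$ by residues, observe the cancellation $(1-taz_1^{-1})=(1-az_1)$ at $z_1^2=t$ and that the cross-terms evaluate to $1$ there, get $(\sqrt{t})^{\lambda_1}(1+(-1)^{\lambda_1})/(2(1+t))$, and iterate; your final bookkeeping with $v_\lambda=v_{\lambda+}\cdot v_{0^{m_0(\lambda)}}$ and $N_\lambda$ is also correct. However, there is a genuine gap in the contour-integration step: your claim that ``the only poles inside the unit disk are $z_1=\pm\sqrt{t}$'' is false. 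The combined cross-term denominator contains the factors $(1-tz_1^{-1}z_j)(1-tz_1^{-1}z_j^{-1})$ for each $j>1$, which produce simple poles at $z_1=tz_j$ and $z_1=tz_j^{-1}$; since $|t|<1$ and $|z_j|=1$, these lie \emph{inside} the unit circle, and nothing in the numerator $(1-t^2z_1^{-1}z_j^{\pm1})(1-z_1z_j^{\pm1})$ cancels them. Their residues are nonzero as functions of $z_2,\dots,z_n$, so they cannot simply be discarded.

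The paper's proof explicitly lists these extra poles and disposes of them by a separate argument: one combines the residue at $z_1=tz_j^{\pm1}$ with the remaining integrand and shows that the subsequent integral over $z_j$ (not the residue itself) vanishes, in the manner of \cite[Theorem 23]{VV}. This is the step your proposal is missing, and without it the clean recursion from $n$ variables to $n-1$ variables that drives both the vanishing statement and the evaluation is not justified. Everything else in your outline would go through once this is supplied.
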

\begin{proof}
We have
\begin{multline*}
\int_{T} K_{\lambda}(z_{1}, \dots, z_{n} ; t^{2}; a,b;a,b,ta,tb) \tilde \Delta_{K}^{(n)}(z;t; \pm \sqrt{t},a,b) dT .
\\
= \frac{1}{v_{\lambda}(t^{2};a,b;a,b,ta,tb)} \sum_{w \in B_{n}} \int_{T} R_{\lambda,w}^{(n)}(z;t^{2};a,b;a,b,ta,tb)  \tilde \Delta_{K}^{(n)}(z;t; \pm \sqrt{t},a,b) dT  \\
= \frac{2^{n}n!}{v_{\lambda}(t^{2};a,b;a,b,ta,tb)} \int_{T} R_{\lambda,\text{id}}^{(n)}(z;t^{2};a,b;a,b,ta,tb)  \tilde \Delta_{K}^{(n)}(z;t; \pm \sqrt{t},a,b) dT,
\end{multline*}
where in the last equation we have used the symmetry of the integral.  We assume $\lambda_{1} > 0$ so that $\lambda \neq 0^{n}$.  Next, we restrict to terms involving $z_{1}$ in the integrand, and integrate with respect to $z_{1}$.  Doing this computation gives the following:
\begin{multline*}
\int_{T_{1}} z_{1}^{\lambda_{1}} \frac{(1-az_{1}^{-1})(1-bz_{1}^{-1})(1-taz_{1}^{-1})(1-tbz_{1}^{-1})}{(1-z_{1}^{-2})} \frac{(1-z_{1}^{\pm1})}{(1+\sqrt{t}z_{1}^{\pm 1})(1-\sqrt{t}z_{1}^{\pm 1})(1-az_{1}^{\pm 1})(1-bz_{1}^{\pm 1})} \\
\times \prod_{j>1} \frac{(1-t^{2}z_{1}^{-1}z_{j})(1-t^{2}z_{1}^{-1}z_{j}^{-1})}{(1-z_{1}^{-1}z_{j})(1-z_{1}^{-1}z_{j}^{-1})} \prod_{j>1} \frac{(1-z_{1}^{\pm 1}z_{j}^{\pm 1})}{(1-tz_{1}^{\pm 1}z_{j}^{\pm 1})} dT \\
= \frac{1}{2\pi i} \int_{C} z_{1}^{\lambda_{1} - 1} \frac{(z_{1}-ta)(z_{1}-tb)(1-z_{1}^{2})}{(1-tz_{1}^{2})(z_{1}+\sqrt{t})(z_{1}-\sqrt{t})(1-az_{1})(1-bz_{1})} \\
\times \prod_{j>1} \frac{(z_{1}-t^{2}z_{j})(z_{1}-t^{2}z_{j}^{-1})(1-z_{1}z_{j})(1-z_{1}z_{j}^{-1})}{(z_{1}-tz_{j})(z_{1}-tz_{j}^{-1})(1-tz_{1}z_{j})(1-tz_{1}z_{j}^{-1})} dz_{1}
\end{multline*}
Note that this integral has poles at $z_{1} = \pm \sqrt{t}$ and $z_{1} = tz_{j}, tz_{j}^{-1}$ for each $j>1$.  

We first compute the residue at $z_{1} = \sqrt{t}$:
\begin{multline*}
(\sqrt{t})^{\lambda_{1} - 1} \frac{(\sqrt{t}-ta)(\sqrt{t}-tb)(1-t)}{(1-t^{2})2\sqrt{t}(1-a\sqrt{t})(1-b\sqrt{t})} \prod_{j>1} \frac{(\sqrt{t}-t^{2}z_{j})(\sqrt{t}-t^{2}z_{j}^{-1})(1-\sqrt{t}z_{j})(1-\sqrt{t}z_{j}^{-1})}{(\sqrt{t}-tz_{j})(\sqrt{t}-tz_{j}^{-1})(1-t\sqrt{t}z_{j})(1-t\sqrt{t}z_{j}^{-1})} \\
=(\sqrt{t})^{\lambda_{1}} \frac{1}{2(1+t)} \prod_{j>1} \frac{(1-t\sqrt{t}z_{j})(1-t\sqrt{t}z_{j}^{-1})(1-\sqrt{t}z_{j})(1-\sqrt{t}z_{j}^{-1})}{(1-\sqrt{t}z_{j})(1-\sqrt{t}z_{j}^{-1})(1-t\sqrt{t}z_{j})(1-t\sqrt{t}z_{j}^{-1})} = \frac{(\sqrt{t})^{\lambda_{1}}}{2(1+t)}
\end{multline*}

Similarly, we can compute the residue at $z_{1} = -\sqrt{t}$:
\begin{multline*}
(-\sqrt{t})^{\lambda_{1}-1} \frac{(-\sqrt{t}-ta)(-\sqrt{t}-tb)(1-t)}{(1-t^{2})(-2\sqrt{t})(1+a\sqrt{t})(1+b\sqrt{t})} \prod_{j>1} \frac{(-\sqrt{t}-t^{2}z_{j})(-\sqrt{t}-t^{2}z_{j}^{-1})(1+\sqrt{t}z_{j})(1+\sqrt{t}z_{j}^{-1})}{(-\sqrt{t}-tz_{j})(-\sqrt{t}-tz_{j}^{-1})(1+t\sqrt{t}z_{j})(1+t\sqrt{t}z_{j}^{-1})} \\
= (-\sqrt{t})^{\lambda_{1}} \frac{1}{2(1+t)} \prod_{j>1} \frac{(1+t\sqrt{t}z_{j})(1+t\sqrt{t}z_{j}^{-1})(1+\sqrt{t}z_{j})(1+\sqrt{t}z_{j}^{-1})}{(1+\sqrt{t}z_{j})(1+\sqrt{t}z_{j}^{-1})(1+t\sqrt{t}z_{j})(1+t\sqrt{t}z_{j}^{-1})} = \frac{(-\sqrt{t})^{\lambda_{1}}}{2(1+t)}
\end{multline*}

The residues at $tz_{j}, tz_{j}^{-1}$ can be computed in a similar manner.  One can then combine these residues (at $tz_{j}, tz_{j}^{-1}$) with the terms from the original integrand and integrate with respect to $z_{j}$.  Some computations show the resulting integral is zero; the argument is similar that used in \cite[Theorem 23]{VV}.  

Finally, we add the residues at $z_{1} = \pm \sqrt{t}$ to get
\begin{equation*}
\frac{(\sqrt{t})^{\lambda_{1}}}{2(1+t)} + \frac{(-\sqrt{t})^{\lambda_{1}}}{2(1+t)} = \begin{cases} 
\frac{(\sqrt{t})^{\lambda_{1}}}{(1+t)} ,& \text{if $\lambda_{1}$ is even} \\
0, & \text{if $\lambda_{1}$ is odd.}
\end{cases}
\end{equation*}

Thus,
\begin{multline*}
2^{n}n! \int_{T} R_{\lambda,\text{id}}^{(n)}(z;t^{2};a,b;a,b,ta,tb)  \tilde \Delta_{K}^{(n)}(z;t; \pm \sqrt{t},a,b) dT \\
= \begin{cases} \frac{(\sqrt{t})^{\lambda_{1}}}{(1+t)} 2^{n-1}(n-1)!  \int_{T} R_{\widehat{\lambda},\widehat{\text{id}}}^{(n-1)}(z;t^{2};a,b;a,b,ta,tb)  \tilde \Delta_{K}^{(n-1)}(z;t; \pm \sqrt{t},a,b) dT, & \text{ if $\lambda_{1}$ is even} \\
0, & \text{otherwise,}
\end{cases}
\end{multline*}
where $\widehat{\lambda}$ is the partition $\lambda$ with the part $\lambda_{1}$ deleted, and $\widehat{\text{id}}$ is the permutation $\text{id}$ with $z_{1}$ deleted and signs preserved.

Consequently, the entire integral vanishes if any part is odd and if $\lambda$ is even, it is equal to
\begin{multline*}
\frac{2^{n}n!}{v_{\lambda}(t^{2};a,b;a,b,ta,tb)} \int_{T} R_{\lambda,\text{id}}^{(n)}(z;t^{2};a,b;a,b,ta,tb)  \tilde \Delta_{K}^{(n)}(z;t; \pm \sqrt{t},a,b) dT \\
= \frac{2^{n-l(\lambda)}(n-l(\lambda)!}{v_{\lambda+}(t^{2};a,b,ta,tb)v_{0^{n-l(\lambda)}}(t^{2};a,b;a,b,ta,tb)} \frac{(\sqrt{t})^{|\lambda|}}{(1+t)^{l(\lambda)}}\\
\times \int_{T} R_{0^{n-l(\lambda)},\text{id}}^{(n-l(\lambda))}(z;t^{2};a,b;a,b,ta,tb)  \tilde \Delta_{K}^{(n-l(\lambda))}(z;t; \pm \sqrt{t},a,b) dT, 
\end{multline*}
where, by abuse of notation in the last line, we use $\text{id}$ to denote the identity element in $B_{n-l(\lambda)}$.  By (\ref{R-pol}), the last line is equal to

\begin{multline*}
\frac{2^{n-l(\lambda)}(n-l(\lambda)!}{v_{\lambda+}(t^{2};a,b,ta,tb)} \frac{(\sqrt{t})^{|\lambda|}}{(1+t)^{l(\lambda)}} \int_{T} K_{0^{n-l(\lambda)},\text{id}}^{(n-l(\lambda))}(z;t^{2};a,b;a,b,ta,tb)  \tilde \Delta_{K}^{(n-l(\lambda))}(z;t; \pm \sqrt{t},a,b) dT \\
= \frac{1}{v_{\lambda+}(t^{2};a,b,ta,tb)} \frac{(\sqrt{t})^{|\lambda|}}{(1+t)^{l(\lambda)}} \int_{T} K_{0^{n-l(\lambda)}}^{(n-l(\lambda))}(z;t^{2};a,b;a,b,ta,tb)  \tilde \Delta_{K}^{(n-l(\lambda))}(z;t; \pm \sqrt{t},a,b) dT \\
= \frac{1}{v_{\lambda+}(t^{2};a,b,ta,tb)} \frac{(\sqrt{t})^{|\lambda|}}{(1+t)^{l(\lambda)}} \int_{T}  \tilde \Delta_{K}^{(n-l(\lambda))}(z;t; \pm \sqrt{t},a,b) dT \\
= \frac{(\sqrt{t})^{|\lambda|}}{(1+t)^{l(\lambda)}} \frac{N_{\lambda}(t; \pm \sqrt{t},a,b) v_{\lambda+}(t; \pm \sqrt{t},a,b)}{v_{\lambda+}(t^{2};a,b,ta,tb)},
\end{multline*}
since $K_{0^{l}}^{(l)}(z;t;a,b;t_{0}, \dots, t_{3}) = 1$ by Theorem \ref{symmtriang} and $n-l(\lambda) = m_{0}(\lambda)$.  

\end{proof}

\section{Nonsymmetric Hall-Littlewood polynomials of type BC}

\subsection{Background and Notation}
We first introduce the affine Hecke algebra of type $BC$, a crucial object in the study of nonsymmetric Koornwinder polynomials.  We retain the notation on partitions, compositions and orderings of Section \ref{notation}.  

\begin{definition} (see \cite{RV})
The \textit{affine Hecke algebra $H$ of type $BC$} is defined to be the $\mathbb{C}(q,t,a,b,c,d)$ algebra with generators $T_{0}, T_{1}, \dots, T_{n}$ ($n>1$), subject to the following braid relations
\begin{align*}
T_{i}T_{j} &= T_{j}T_{i}, \; \; \; |i-j| \geq 2, \\
T_{i}T_{j}T_{i} &= T_{j}T_{i}T_{j}, \; \; \; |i-j| = 1, i,j \neq 0,n, \\
T_{i}T_{i+1}T_{i}T_{i+1} &= T_{i+1}T_{i}T_{i+1}T_{i} \; \; \; \; (i=0, i = n-1) 
\end{align*}
and the quadratic relations
\begin{align*}
(T_{0} + 1)(T_{0} + cd/q) &= 0, \\
(T_{i} + 1)(T_{i}-t) &= 0, \; \; \; i \neq 0,n, \\
(T_{n} + 1)(T_{n} + ab) &= 0.
\end{align*}

\end{definition}

Recall that, by the Noumi representation (see \cite{Sahi, Stok1}), there is an action of $H$ on the vector space of Laurent polynomials $\mathbb{C}(q^{1/2},t,a,b,c,d)[x_{1}^{\pm 1}, \dots, x_{n}^{\pm 1}]$ (here $x_{1}, \dots, x_{n}$ are $n$ independent indeterminates) as follows:
\begin{align*}
T_{0}f &= -(cd/q)f + \frac{(1-c/x_{1})(1-d/x_{1})}{1-q/x_{1}^{2}}(f^{s_{0}}-f) \\
T_{i}f &= tf + \frac{x_{i+1}-tx_{i}}{x_{i+1}-x_{i}}(f^{s_{i}}-f) \; \; \; \; \; (\text{for } 0<i<n)\\
T_{n}f &= -abf + \frac{(1-ax_{n})(1-bx_{n})}{1-x_{n}^{2}}(f^{s_{n}}-f),
\end{align*}
where $f^{s_{0}}(x_{1}, \dots, x_{n}) = f(q/x_{1},x_{2}, \dots, x_{n})$, $f^{s_{i}}(x_{1}, \dots, x_{n}) = f(x_{1}, \dots, x_{i-1},x_{i+1},x_{i},x_{i+2}, \dots, x_{n})$ for $0<i<n$, and $f^{s_{n}}(x_{1}, \dots, x_{n}) = f(x_{1}, \dots, x_{n-1},1/x_{n})$.  Note that, for $0<i \leq n$, the action of $T_{i}$ on polynomials is independent of $q$; this will be crucial for the rest of the paper.  

We will denote the nonsymmetric Koornwinder polynomials in $n$ variables by $U_{\lambda}^{(n)}(x;q,t;a,b,c,d)$ \cite{NK, Sahi, Stok1, Stok2}.  Recall that these polynomials are indexed by compositions $\lambda$.  We will remind the reader how these polynomials are defined, but we must first set up some relevant notation.
Let $^{\iota}$ be the involution defined by
\begin{equation*}
^{\iota}: q \rightarrow q^{-1}, t \rightarrow t^{-1}, a \rightarrow a^{-1}, b \rightarrow b^{-1}, c \rightarrow c^{-1}, d \rightarrow d^{-1}, z^{\mu} \rightarrow z^{\mu}
\end{equation*}
and let $\bar{}$ be the involution defined by
\begin{equation*}
\bar{}: q \rightarrow q, t \rightarrow t, a \rightarrow a, b \rightarrow b, c \rightarrow c, d \rightarrow d, z^{\mu} \rightarrow z^{-\mu}.
\end{equation*}
Define the weight
\begin{multline} \label{nonsymmdensity}
\Delta_{K}^{(n)}(z;q,t;a,b,c,d) = \prod_{1 \leq i \leq n} \frac{(z_{i}^{2}, qz_{i}^{-2};q)}{(az_{i}, bz_{i}, cz_{i}, dz_{i}, aqz_{i}^{-1}, bqz_{i}^{-1}, cz_{i}^{-1},dz_{i}^{-1};q)} \\
\times \prod_{1 \leq i<j \leq n} \frac{(z_{i}z_{j}^{\pm 1},qz_{i}^{-1}z_{j}^{\pm 1};q)}{(tz_{i}z_{j}^{\pm 1}, qtz_{i}^{-1}z_{j}^{\pm 1};q)},
\end{multline}
i.e., the full nonsymmetric density, see \cite{RV}.  Note that $\Delta_{K}^{(n)}(z;q,1;1,-1,0,0) = 1$; this specialization is independent of $q$.  As in the symmetric case, when the parameters are clear from context, we will suppress them to make the notation easier.  Note the following formula for the nonsymmetric density at the specialization $q=0$:
\begin{equation} \label{nonsymmzero}
\prod_{1 \leq i \leq n} \frac{(1-z_{i}^{2})}{(1-az_{i})(1-bz_{i})(1-cz_{i})(1-dz_{i})(1-cz_{i}^{-1})(1-dz_{i}^{-1})} \prod_{1 \leq i<j \leq n} \frac{1-z_{i}z_{j}^{\pm 1}}{1-tz_{i}z_{j}^{\pm 1}}.
\end{equation}
We will write $\Delta_{K}^{(n)}(z;t;a,b,c,d)$ to indicate this particular limiting case.

With this terminology, consider the following inner product on functions of $n$ variables with parameters $q,t,a,b,c,d$:
\begin{equation*}
\langle f,g \rangle_{q} = \int_{T} f \bar{g}^{\iota} \Delta_{K}^{(n)}(z;q,t;a,b,c,d) dT.
\end{equation*}
Note that it is the constant term of $f \bar{g}^{\iota}\Delta_{K}^{(n)}$.  Also, denote by $\langle, \rangle_{0}$ the following inner product:
\begin{equation} \label{ipzero}
 \langle f,g \rangle_{0} = \int_{T} f \bar{g}^{\iota} \Delta_{K}^{(n)}(z;t;a,b,c,d) dT,
\end{equation}
involving the $q = 0$ degeneration of the full nonsymmetric Koornwinder weight as in (\ref{nonsymmzero}).  

Recall that the polynomials $\{U_{\mu}^{(n)}(x;q,t;a,b,c,d) \}_{\mu \in \mathbb{Z}^{n}}$ are uniquely defined by the following conditions:
\begin{align*}
&(i) \; \; U_{\mu} = x^{\mu} + \sum_{\nu \prec \mu} w_{\mu \nu} x^{\nu} \\
& (ii) \; \; \langle U_{\mu}, x^{\nu} \rangle_{q} = 0 \text{ if } \nu \prec \mu,
\end{align*}
where as usual we write $x^{\mu}$ for the monomial $x_{1}^{\mu_{1}}x_{2}^{\mu_{2}} \cdots x_{n}^{\mu_{n}}$.  

\begin{definition}\label{nonsymmpart}
For a partition $\lambda$ with $l(\lambda) \leq n$, define
\begin{equation*} 
E_{\lambda}^{(n)}(z;c,d) =  \prod_{\lambda_{i} > 0} z_{i}^{\lambda_{i}} (1-cz_{i}^{-1})(1-dz_{i}^{-1}).
\end{equation*}
\end{definition}

\subsection{Main Results}
We will first show that, under the assumption that $\lambda$ is a partition with $l(\lambda) \leq n$, $E_{\lambda}^{(n)}(z;c,d)$ is the $q=0$ limiting case of the nonsymmetric Koornwinder polynomial $U_{\lambda}^{(n)}(x;q,t;a,b,c,d)$.


\begin{theorem} \label{triang}
(Triangularity) The polynomials $E_{\lambda}^{(n)}(z;c,d)$ are triangular with respect to dominance ordering, i.e.,
\begin{equation*}
E_{\lambda}^{(n)}(z;c,d) = z^{\lambda} + \sum_{\mu \prec \lambda} c_{\mu}z^{\mu}
\end{equation*}
for all partitions $\lambda$.  
\end{theorem}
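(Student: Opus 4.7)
The plan is to expand $E_\lambda^{(n)}(z;c,d)$ directly as a sum of monomials and then analyze which compositions $\mu$ arise, checking that they satisfy $\mu \prec \lambda$ in the ordering of Definition~\ref{nonsymmorder}. Write each factor as $(1-cz_i^{-1})(1-dz_i^{-1}) = 1 - (c+d)z_i^{-1} + cd\, z_i^{-2}$, so that after multiplying by $z_i^{\lambda_i}$ (for $\lambda_i > 0$) the monomials $z^\mu$ appearing have $\mu_i \in \{\lambda_i,\lambda_i - 1,\lambda_i - 2\}$ when $\lambda_i > 0$, and $\mu_i = 0$ when $\lambda_i = 0$. The leading monomial $z^\lambda$, obtained by selecting the constant $1$ from every factor, appears with coefficient $1$; I need to show that every other $\mu$ that arises satisfies $\mu \prec \lambda$.

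The key numerical observation is that in all cases $|\mu_i| \leq \lambda_i$. Indeed, when $\lambda_i = 0$ this is trivial, and when $\lambda_i \geq 1$ the three possible values $\lambda_i,\lambda_i-1,\lambda_i-2$ all lie in $[-1,\lambda_i] \subseteq [-\lambda_i,\lambda_i]$ (note $\lambda_i - 2 \geq -1 \geq -\lambda_i$). From this I deduce $\mu^+ \leq \lambda^+ = \lambda$ in dominance: since $\lambda$ is already non-increasing, for any $k$ one has
\begin{equation*}
\sum_{i=1}^k (\mu^+)_i = \max_{|S|=k}\sum_{i\in S}|\mu_i| \leq \max_{|S|=k}\sum_{i\in S}\lambda_i = \sum_{i=1}^k \lambda_i.
\end{equation*}

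If this inequality is strict for some $k$, then $\mu^+ < \lambda^+$ and case (1) of Definition~\ref{nonsymmorder} gives $\mu \prec \lambda$. Otherwise $\mu^+ = \lambda^+$ forces $\sum_i |\mu_i| = \sum_i \lambda_i$, and combined with $|\mu_i|\leq\lambda_i$ this yields $|\mu_i|=\lambda_i$ for every $i$. For each index with $\lambda_i > 0$ this means $\mu_i \in \{\lambda_i,\lambda_i-1,\lambda_i-2\}$ has absolute value $\lambda_i$, which forces either $\mu_i=\lambda_i$, or $\lambda_i=1$ and $\mu_i=-1$. The case $\mu_i = \lambda_i$ for all $i$ is the leading term and is excluded; otherwise at least one entry satisfies $\mu_i = -1 < 1 = \lambda_i$ while $\mu_j \leq \lambda_j$ for every $j$, so $\sum_{i=1}^k \mu_i \leq \sum_{i=1}^k \lambda_i$ for all $k$ with $\mu \neq \lambda$. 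Thus $\mu < \lambda$ in composition dominance, which is case (2) of Definition~\ref{nonsymmorder}, and again $\mu \prec \lambda$.

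There is no serious obstacle; the statement is essentially combinatorial, and the only subtle point is verifying the dominance inequality for the rearrangement $\mu^+$ from the pointwise bound $|\mu_i| \leq \lambda_i$. The rearrangement-versus-pointwise-bound step is the one place where care is needed, but it follows cleanly from the max-over-$k$-subsets characterization used above.
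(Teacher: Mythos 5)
Your proof is correct and follows essentially the same route as the paper: expand each factor as $z_i^{\lambda_i}-(c+d)z_i^{\lambda_i-1}+cd\,z_i^{\lambda_i-2}$ and observe that every non-leading monomial is dominated. The paper dismisses the verification that the resulting exponents $\mu$ satisfy $\mu\prec\lambda$ as clear, whereas you carry it out carefully (including the borderline case $\lambda_i=1$, $\mu_i=-1$, which lands in case (2) of Definition~\ref{nonsymmorder}); your added detail is accurate.
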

\begin{proof}
It is clear that $E_{\lambda}^{(n)}(z;c,d) = z^{\lambda} + (\text{dominated terms})$, since the term inside the product definition of $E_{\lambda}^{(n)}(z;c,d)$ is just $z_{i}^{\lambda_{i}} - (c + d)z_{i}^{\lambda_{i}-1} + cd z_{i}^{\lambda_{i}-2}$.
\end{proof}

\begin{theorem} \label{nonsymmeval}
We have the following constant term evaluation in the nonsymmetric case (with respect to $q=0$ limit of the nonsymmetric density as in (\ref{nonsymmzero}))
\begin{equation*}
\int_{T} \Delta_{K}^{(n)}(z;t;a,b,c,d) dT = \prod_{i=0}^{n-1} \frac{1}{(1-t^{i}ac)(1-t^{i}bc)(1-t^{i}cd)(1-t^{i}ad)(1-t^{i}bd)} \prod_{j=n-1}^{2n-2} (1-t^{j}abcd).
\end{equation*}
\end{theorem}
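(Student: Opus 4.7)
The plan is to prove the formula by induction on $n$ via a residue-theorem recurrence. Write $I_n(a,b,c,d) := \int_T \Delta_K^{(n)}(z;t;a,b,c,d)\, dT$. Assuming $|a|,|b|,|c|,|d|,|t|<1$, so that $|1/a|,|1/b|>1$ and $|1/(tz_j^{\pm 1})|>1$, the only poles of the integrand inside the unit circle in the variable $z_1$ are the simple poles at $z_1 = c$ and $z_1 = d$ coming from $(1-cz_1^{-1})(1-dz_1^{-1})$. I would hold $z_2,\dots,z_n$ fixed, rewrite the $z_1$-integrand as a rational function of $z_1$, and apply the residue theorem on the unit circle.

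The key computation is that the residue at $z_1=c$ factors cleanly: all $z_1$-dependent factors (univariate and cross) evaluated at $z_1=c$ reorganize, after cancellation of the $(1-c^2)$ factor and multiplication by the $z_j$-dependent terms coming from $\prod_{j>1}\tfrac{(1-z_1 z_j^{\pm 1})}{(1-tz_1 z_j^{\pm 1})}$, into a scalar $C_1 = \tfrac{c}{(1-ac)(1-bc)(1-cd)(c-d)}$ multiplying exactly the integrand of $\Delta_K^{(n-1)}(z_2,\dots,z_n;t;a,b,tc,d)$, i.e.\ the same density in one fewer variable with the shift $c\mapsto tc$. The analogous computation at $z_1=d$ yields $C_2 I_{n-1}(a,b,c,td)$ with $C_2$ obtained from $C_1$ by swapping $c\leftrightarrow d$. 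This produces the recurrence
\begin{equation*}
I_n(a,b,c,d) = C_1\, I_{n-1}(a,b,tc,d) + C_2\, I_{n-1}(a,b,c,td),
\end{equation*}
with base cases $I_0=1$ and $I_1 = \tfrac{1-abcd}{(1-ac)(1-bc)(1-cd)(1-ad)(1-bd)}$ (where the latter follows from the same one-variable residue calculation together with the identity $c(1-ad)(1-bd)-d(1-ac)(1-bc)=(c-d)(1-abcd)$).

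For the induction step I would substitute the conjectured closed form at level $n-1$ into both terms of the recurrence. The telescoping product structure makes the dependence on $i=1,\dots,n-2$ match immediately, and the product $\prod_{j=n-2}^{2n-4}(1-t^{j+1}abcd)$ factors out of both residue contributions (since shifting either $c$ or $d$ by $t$ replaces $abcd$ by $tabcd$, which simply reindexes the product). What remains to verify is the single algebraic identity
\begin{equation*}
\frac{c(1-t^{n-1}ad)(1-t^{n-1}bd) - d(1-t^{n-1}ac)(1-t^{n-1}bc)}{c-d} = 1 - t^{2(n-1)}abcd,
\end{equation*}
which supplies the new factor $(1-t^{2(n-1)}abcd)$ needed to extend the product to $\prod_{j=n-1}^{2n-2}$. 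The most delicate step, and the main obstacle, is the first one: verifying that the residue at $z_1=c$ really does produce $\Delta_K^{(n-1)}$ with the clean parameter shift $c\mapsto tc$ and no extra leftover factors. Once this collapse of the $\prod_{j>1}$ cross terms is established, everything else is bookkeeping plus the one-line algebraic identity above.
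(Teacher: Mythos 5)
Your proposal is correct and follows essentially the same route as the paper: the paper's proof (given inside the proof of Theorem \ref{symmeval}, as equation (\ref{nonsymmeqn})) integrates out $z_1$ by residues at the simple poles $z_1=c$ and $z_1=d$, obtains exactly your recurrence $I_n = C_1 I_{n-1}(a,b,tc,d) + C_2 I_{n-1}(a,b,c,td)$ with the same constants and base cases, and closes the induction with the same identity $\frac{c(1-t^{n-1}ad)(1-t^{n-1}bd)-d(1-t^{n-1}ac)(1-t^{n-1}bc)}{c-d}=1-t^{2(n-1)}abcd$. The step you flag as delicate --- the collapse of the cross terms $\prod_{j>1}\frac{(z_j-c)(1-cz_j)}{(z_j-tc)(1-tcz_j)}$ into the density with $c\mapsto tc$ --- is carried out explicitly in the paper and works exactly as you describe.
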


\begin{proof}
This follows from the proof of Theorem \ref{symmeval}, in particular recall (\ref{nonsymmeqn}).  
\end{proof}

\begin{theorem} \label{orthog} (Orthogonality)
Let $\lambda$ be a partition with $l(\lambda) \leq n$ and $\mu \in \mathbb{Z}^{n}$ a composition, such that $\mu \prec \lambda$.  Then we have $\langle E_{\lambda}^{(n)}(z;c,d), z^{\mu} \rangle_{0} = 0$.
\end{theorem}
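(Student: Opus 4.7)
The plan is to prove the vanishing by induction on $n$, integrating out the variable $z_{1}$ first and exploiting a crucial cancellation. When $\lambda_{1}>0$, the factor $(1-cz_{1}^{-1})(1-dz_{1}^{-1})$ in $E_{\lambda}^{(n)}(z;c,d)$ cancels exactly the matching factor in the denominator of $\Delta_{K}^{(n)}(z;t;a,b,c,d)$. Combined with $\bar{z^{\mu}}^{\iota}=z^{-\mu}$, the $z_{1}$-dependent portion of the integrand reduces to
\[
z_{1}^{\lambda_{1}-\mu_{1}} \cdot \frac{1-z_{1}^{2}}{(1-az_{1})(1-bz_{1})(1-cz_{1})(1-dz_{1})} \prod_{j>1}\frac{(1-z_{1}z_{j})(1-z_{1}z_{j}^{-1})}{(1-tz_{1}z_{j})(1-tz_{1}z_{j}^{-1})}.
\]
Writing this as $z_{1}^{\lambda_{1}-\mu_{1}}h(z_{1})$, the hypothesis $|a|,|b|,|c|,|d|,|t|<1$ places every denominator zero strictly outside $|z_{1}|\le 1$, so $h$ is holomorphic on a neighborhood of the closed unit disk, with $h(0)=1$.

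Integrating against $dz_{1}/(2\pi i z_{1})$ extracts the coefficient of $z_{1}^{\mu_{1}-\lambda_{1}}$ in the power-series expansion of $h$ at $0$. The hypothesis $\mu\prec\lambda$ combined with $\lambda_{1}=\lambda^{+}_{1}$ gives $\mu^{+}_{1}\le\lambda_{1}$, hence $|\mu_{i}|\le\lambda_{1}$ for all $i$, and in particular $\mu_{1}\le\lambda_{1}$. If $\mu_{1}<\lambda_{1}$, then the coefficient of a negative power of $z_{1}$ in the power series of $h$ vanishes, so the $z_{1}$-integral is already zero and the entire integral is zero by Fubini. If $\mu_{1}=\lambda_{1}$, the $z_{1}$-integral evaluates to $h(0)=1$, leaving behind precisely the analogous $(n-1)$-dimensional integrand built from $\widehat{\lambda}=(\lambda_{2},\dots,\lambda_{n})$ and $\widehat{\mu}=(\mu_{2},\dots,\mu_{n})$.

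To close the induction I would verify that $\widehat{\mu}\prec\widehat{\lambda}$ in the sense of Definition \ref{nonsymmorder}. If $\mu^{+}<\lambda^{+}$, then $\mu_{1}=\lambda_{1}$ forces $\mu^{+}_{1}=\lambda^{+}_{1}$, so the first strict inequality in the partial sums of $\mu^{+}$ occurs at an index $k\ge 2$ and descends to $\widehat{\mu}^{+}<\widehat{\lambda}^{+}$. If instead $\mu^{+}=\lambda^{+}$ with $\mu\le\lambda$ in composition dominance, subtracting the first (equal) partial sums yields $\widehat{\mu}\le\widehat{\lambda}$, while removing one copy of the common maximum absolute value gives $\widehat{\mu}^{+}=\widehat{\lambda}^{+}$. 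Since $\mu\ne\lambda$ and $\mu_{1}=\lambda_{1}$ force $\widehat{\mu}\ne\widehat{\lambda}$, in both cases $\widehat{\mu}\prec\widehat{\lambda}$, and the inductive hypothesis supplies the desired vanishing. The base case $\lambda=0^{n}$ is vacuous, since $\mu\prec 0^{n}$ would force $\mu^{+}=0$ and hence $\mu=\lambda$.

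The main obstacle is the careful bookkeeping in the reduction of the ordering, since the two clauses of Definition \ref{nonsymmorder} must be handled separately and one must track how the largest absolute value in $\mu^{+}$ is consumed when $\mu_{1}$ is removed. A minor but essential technical point is the strict inequality $|a|,|b|,|c|,|d|,|t|<1$, which keeps all nontrivial poles of $h(z_{1})$ outside the unit circle, making the $z_{1}$ integration a clean constant-term extraction rather than a residue calculation at poles on the torus.
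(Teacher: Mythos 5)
Your proof is correct and follows essentially the same route as the paper's: induct on $n$, integrate out $z_{1}$ using the cancellation of the $(1-cz_{1}^{-1})(1-dz_{1}^{-1})$ factors against the density, observe that the $z_{1}$-integral vanishes when $\mu_{1}<\lambda_{1}$ and equals $1$ when $\mu_{1}=\lambda_{1}$, and reduce to the $(n-1)$-variable case. The only cosmetic difference is that the paper first converts $\mu\prec\lambda$ into a reverse-lexicographic condition together with the requirement that the first differing part of $\lambda$ be nonzero (conditions that descend trivially when the first entry is deleted), whereas you propagate the ordering $\prec$ itself through the induction via the two-case comparison of $\mu^{+}$ and $\lambda^{+}$; both bookkeeping schemes are valid.
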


\begin{proof} 
Fix $\lambda$ a partition.  First note that, by definition of the inner product $\langle \cdot, \cdot,  \rangle_{0}$ in (\ref{ipzero}) we have
\begin{multline*}
\langle E_{\lambda}^{(n)}(z;c,d), z^{\mu} \rangle_{0} \\= \int_{T} E_{\lambda}^{(n)}(z;c,d) z^{-\mu}\prod_{1 \leq i \leq n} \frac{(1-z_{i}^{2})}{(1-az_{i})(1-bz_{i})(1-cz_{i})(1-dz_{i})(1-cz_{i}^{-1})(1-dz_{i}^{-1})} \prod_{1 \leq i<j \leq n} \frac{1-z_{i}z_{j}^{\pm 1}}{1-tz_{i}z_{j}^{\pm 1}}.
\end{multline*}

We will first show $\langle E_{\lambda}^{(n)}(z;c,d), z^{\mu} \rangle_{0} = 0$ for all compositions $\mu$ satisfying the following two properties:

\noindent \textbf{Condition (i)} $\mu \stackrel{\text{lex}}{<} \lambda$, so in particular there exists $1 \leq i \leq n$ such that $\mu_{1} = \lambda_{1}, \dots, \mu_{i-1} = \lambda_{i-1}$ and $\mu_{i}< \lambda_{i}$.  

\noindent \textbf{Condition (ii)} $\lambda_{i} \neq 0$ (where $i$ is as in (i)).  

We mention that condition (ii) is necessary because of the difference between nonzero and zero parts of $\lambda$ in Definition \ref{nonsymmpart}; in particular if $\lambda_{i} = 0$ then one does not have the term $z_{i}^{\lambda_{i}}(1-cz_{i}^{-1})(1-dz_{i}^{-1})$ in $E_{\lambda}^{(n)}(z;c,d)$ (so that one still has the terms $1/(1-cz_{i}^{-1})(1-dz_{i}^{-1})$ in the product $E_{\lambda}^{(n)}(z;c,d)\Delta_{K}^{(n)}$) .
We give a proof by induction on $n$, the number of variables.  Note first that condition (ii) implies that $\lambda_{1}, \dots, \lambda_{i-1} \neq 0$.  Consider the case $n=1$.  Then in particular $i=1$ and conditions (i) and (ii) give $\mu_{1} < \lambda_{1} \neq 0$.  One can then compute
\begin{equation*}
\langle E_{\lambda}^{(n)}, z^{\mu} \rangle_{0} = \int_{T} z_{1}^{\lambda_{1} - \mu_{1}} \frac{(1-z_{1}^{2})}{(1-az_{1})(1-bz_{1})(1-cz_{1}) (1-dz_{1})} dT,
\end{equation*}
since $\lambda_{1} - \mu_{1} >0$ this is necessarily zero.  Now suppose the claim holds for $n-1$, we show it holds for $n$.  

We may restrict the $n$-dimensional integral $\langle E_{\lambda}(z), z^{\mu} \rangle_{0}$ to the contribution involving $z_{1}$, one computes it to be 
\begin{equation*}
\int_{T} z_{1}^{\lambda_{1} - \mu_{1}} \frac{(1-z_{1}^{2})}{(1-az_{1})(1-bz_{1})(1-cz_{1}) (1-dz_{1})} \prod_{j>1} \frac{1-z_{1}z_{j}^{\pm 1}}{1-tz_{1}z_{j}^{\pm 1}} dT.
\end{equation*}
If $i=1$, then $\lambda_{1} > \mu_{1}$ and this integral (and consequently the $n$-dimensional integral) are zero.  If $i>1$, then $\lambda_{1} = \mu_{1}$ and this integral is $1$.  In this case, one notes that the resulting $n-1$ dimensional integral is exactly:
\begin{equation*}
\int_{T} E_{\widehat{\lambda}}^{(n-1)}(z_{2}, \dots, z_{n}) z^{-\widehat{\mu}} \Delta_{K}^{(n-1)} dT,
\end{equation*}
where $\widehat{\lambda} = (\lambda_{2}, \dots, \lambda_{n})$ and $\widehat{\mu} = (\mu_{2}, \dots, \mu_{n})$.  Note that conditions (i) and (ii) hold for $\widehat{\mu}$ and $\widehat{\lambda}$, and since this is the $n-1$ variable case we may appeal to the induction hypothesis.  Thus, the above integral is zero; consequently $\langle E_{\lambda}^{(n)}, z^{\mu} \rangle = 0$ as desired.  

Finally, it remains to show that $\mu \prec \lambda$ implies conditions (i) and (ii).  Recall that there are two cases for $\mu \prec \lambda$.  In case 1), note that we have $\mu \leq \mu^{+} < \lambda$ with respect to the dominance ordering, so $\mu < \lambda$.  This implies $\mu \stackrel{\text{lex}}{<} \lambda$ by Lemma \ref{order}.  In case 2), it is clear.  Now we show condition (ii).  Suppose for contradiction that $\lambda_{i} = 0$, so that $\mu_{1} = \lambda_{1} \geq 0 , \dots, \mu_{i-1} = \lambda_{i-1} \geq 0$ and $\mu_{i} < \lambda_{i} = 0$ and $\lambda_{k} = 0$ for all $i<k \leq n$.  Then note that $\sum_{k=1}^{i} (\mu^{+})_{k} > \sum_{k=1}^{i} \lambda_{k}$, which contradicts $\mu^{+} \leq \lambda$.  Thus, we must have $\lambda_{i} \neq 0$ as desired.  
\end{proof}

\begin{theorem} \label{nonsymmnorm}
Let $\lambda$ be a partition with $l(\lambda) \leq n$, then 
\begin{multline*}
\langle E_{\lambda}^{(n)}(z;c,d), E_{\lambda}^{(n)}(z;c,d) \rangle_{0} = \langle E_{\lambda}^{(n)}(z;c,d), z^{\lambda} \rangle_{0} \\
=\prod_{i=0}^{m_{0}(\lambda)-1} \frac{1}{(1-t^{i}ac)(1-t^{i}bc)(1-t^{i}cd)(1-t^{i}ad)(1-t^{i}bd)} \prod_{j=m_{0}(\lambda)-1}^{2m_{0}(\lambda)-2} (1-t^{j}abcd)
\end{multline*}
\end{theorem}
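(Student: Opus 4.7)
The plan is to reduce the self-pairing to $\langle E_\lambda^{(n)}, z^\lambda\rangle_0$ via triangularity and orthogonality, then evaluate the latter by an iterative residue argument that eliminates each variable with $\lambda_i > 0$ one at a time, leaving a pure constant-term integral to which Theorem \ref{nonsymmeval} applies.

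For the first equality, Theorem \ref{triang} gives $E_\lambda^{(n)}(z;c,d) = z^\lambda + \sum_{\mu \prec \lambda} c_\mu z^\mu$. Since the involution $g \mapsto \bar g^\iota$ negates monomial exponents and inverts scalar parameters, $\overline{E_\lambda^{(n)}}^\iota = z^{-\lambda} + \sum_{\mu \prec \lambda} \tilde c_\mu z^{-\mu}$. Expanding $\langle E_\lambda^{(n)}, E_\lambda^{(n)}\rangle_0$ linearly in the second argument and killing each tail term $\langle E_\lambda^{(n)}, z^\mu\rangle_0$ with $\mu \prec \lambda$ by Theorem \ref{orthog} yields $\langle E_\lambda^{(n)}, E_\lambda^{(n)}\rangle_0 = \langle E_\lambda^{(n)}, z^\lambda\rangle_0$.

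For the evaluation of $\langle E_\lambda^{(n)}, z^\lambda\rangle_0$, the key observation is that $E_\lambda^{(n)}(z)\, z^{-\lambda} = \prod_{\lambda_i > 0}(1-cz_i^{-1})(1-dz_i^{-1})$, and these factors exactly cancel the matching denominator factors of $\Delta_K^{(n)}(z;t;a,b,c,d)$ at positions with $\lambda_i > 0$. I would then integrate out the variables $z_1, z_2, \ldots, z_{l(\lambda)}$ in that order. At step $i$, after cancellation, the only $z_i$-dependent factors remaining in the integrand are
\begin{equation*}
\frac{1-z_i^2}{(1-az_i)(1-bz_i)(1-cz_i)(1-dz_i)} \prod_{j>i} \frac{(1-z_iz_j)(1-z_iz_j^{-1})}{(1-tz_iz_j)(1-tz_iz_j^{-1})},
\end{equation*}
whose poles in $z_i$ (namely $1/a$, $1/b$, $1/c$, $1/d$, $1/(tz_j)$, $z_j/t$) all lie strictly outside the unit disk under the standing assumption $|a|,|b|,|c|,|d|,|t| < 1$ and $|z_j| = 1$. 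The integrand is therefore analytic in $z_i$ inside the disk and takes value $1$ at $z_i = 0$, so $\int_T (\cdot)\, dz_i/(2\pi\sqrt{-1}\, z_i)$ contributes a factor of $1$ and the remaining integrand is identical in form on the remaining $n-i$ variables. After $l(\lambda)$ such steps, what is left is precisely $\int_T \Delta_K^{(m_0(\lambda))}(z;t;a,b,c,d)\, dT$, to which Theorem \ref{nonsymmeval} applies directly to give the stated product.

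The main obstacle is the pole bookkeeping in the iterative step. A factor of the form $\frac{1-z_iz_j^{-1}}{1-tz_iz_j^{-1}}$ has an interior pole at $z_i = tz_j$ whenever $z_j$ is still present as a unit-circle variable with $j < i$; this would spoil the clean residue at the origin. Fixing the integration order so that one always eliminates the smallest remaining index first ensures that, at the moment $z_i$ is integrated, every surviving variable carries index strictly larger than $i$, so no such interior pole can arise and each variable is disposed of by a single Cauchy residue at zero.
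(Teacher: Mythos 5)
Your proposal is correct and follows essentially the same route as the paper: the first equality via Theorems \ref{triang} and \ref{orthog}, and the second by integrating out the variables with $\lambda_i>0$ in increasing order of index (each contributing a factor of $1$ after the cancellation of $(1-cz_i^{-1})(1-dz_i^{-1})$ against the density), leaving $\int_T \Delta_K^{(m_0(\lambda))}\,dT$, which is evaluated by Theorem \ref{nonsymmeval}. The paper leaves the single-variable step to the reader, and your pole bookkeeping --- in particular the observation that the factor $\tfrac{1}{1-tz_iz_j^{-1}}$ ($i<j$) has an interior pole in $z_j$ but not in $z_i$, which forces the smallest-index-first order --- is exactly the verification being asked for.
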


\begin{proof}
The first equality follows from Theorems \ref{triang} and \ref{orthog}.  For the second equality, we use arguments similar to those used in the proof of Theorem \ref{orthog}.  We first note that 
\begin{multline*}
\langle E_{\lambda}^{(n)}(z;c,d), z^{\lambda} \rangle_{0} \\= \int_{T} E_{\lambda}^{(n)}(z;c,d) z^{-\lambda}\prod_{1 \leq i \leq n} \frac{(1-z_{i}^{2})}{(1-az_{i})(1-bz_{i})(1-cz_{i})(1-dz_{i})(1-cz_{i}^{-1})(1-dz_{i}^{-1})} \prod_{1 \leq i<j \leq n} \frac{1-z_{i}z_{j}^{\pm 1}}{1-tz_{i}z_{j}^{\pm 1}}.
\end{multline*}
One can integrate with respect to $z_{1}$, holding the remaining variables fixed; the reader can verify that the result is $\langle E_{\widehat \lambda}^{(n-1)}(z;c,d), z^{\widehat \lambda}\rangle_{0}$, where $\widehat \lambda = (\lambda_{2}, \dots, \lambda_{n})$.  Iterating this argument shows that
\begin{equation*}
\langle E_{\lambda}^{(n)}(z;c,d), z^{\lambda} \rangle_{0} = \int_{T} \Delta_{K}^{(m_{0}(\lambda))} dT.
\end{equation*}
By Theorem \ref{nonsymmeval}, this is equal to 
\begin{equation*}
\prod_{i=0}^{m_{0}(\lambda)-1} \frac{1}{(1-t^{i}ac)(1-t^{i}bc)(1-t^{i}cd)(1-t^{i}ad)(1-t^{i}bd)} \prod_{j=m_{0}(\lambda)-1}^{2m_{0}(\lambda)-2} (1-t^{j}abcd),
\end{equation*}
as desired.
\end{proof}

In the next theorem, we will prove that these polynomials $E_{\lambda}^{(n)}(z;c,d)$ are indeed the nonsymmetric Koornwinder polynomials indexed by a partition in the limit $q \rightarrow 0$.  In order to do this, we will show that the limit is actually well-defined, and that polynomials satisfying the above triangularity and orthogonality conditions are uniquely determined.

\begin{theorem} \label{qlim}
The $q \rightarrow 0$ limit of the nonsymmetric Koornwinder polynomials are well-defined.  Moreover, for a partition $\lambda$, $\displaystyle \lim_{q \rightarrow 0} U_{\lambda}^{(n)}(z; q,t; a,b,c,d) = E_{\lambda}^{(n)}(z;c,d)$.  
\end{theorem}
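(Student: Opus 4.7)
The plan is to separate the theorem into two subclaims: (a) the coefficients of $U_\mu^{(n)}(z;q,t;a,b,c,d)$ (expanded in monomials) are regular at $q=0$ so the limit exists, and (b) for partition indices the limit agrees with $E_\lambda^{(n)}(z;c,d)$.

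For part (b), I would first argue that the defining pairing $\langle f, g\rangle_q$ specializes cleanly at $q=0$ to $\langle f, g\rangle_0$. This is because the full nonsymmetric density $\Delta_K^{(n)}(z;q,t;a,b,c,d)$ of (\ref{nonsymmdensity}) is a rational function of $q$ that is regular at $q=0$ under the standing smallness assumptions on $|t|,|a|,\dots,|d|$, degenerating to the explicit expression in (\ref{nonsymmzero}). Consequently, if $\lim_{q\to 0} U_\lambda^{(n)}(z;q,t;a,b,c,d)$ exists as a Laurent polynomial, it must satisfy the $q=0$ limit of the defining conditions: triangularity $P = z^\lambda + \sum_{\nu \prec \lambda} c_\nu z^\nu$ and orthogonality $\langle P, z^\nu\rangle_0 = 0$ for $\nu \prec \lambda$. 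By Theorems \ref{triang} and \ref{orthog}, $E_\lambda^{(n)}(z;c,d)$ also satisfies these two conditions, so it remains to establish uniqueness at $q=0$ (for partition indices $\lambda$).

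The uniqueness step I would handle by induction on $\lambda$ with respect to the partial order $\prec$ of Definition \ref{nonsymmorder}. Suppose $P, Q$ both satisfy the limiting conditions; then $R = P-Q$ lies in $\mathrm{span}\{z^\nu : \nu \prec \lambda\}$ and satisfies $\langle R, z^\nu\rangle_0 = 0$ for every such $\nu$. Writing $R = \sum_{\nu \prec \lambda} r_\nu z^\nu$ and selecting the $\prec$-maximal $\nu^*$ with $r_{\nu^*}\neq 0$, the computation in the proof of Theorem \ref{orthog} (specifically the $z_1$-integral analysis that pins down when contributions are nonzero) shows that the pairing $\langle z^{\nu^*}, z^{\nu^*}\rangle_0$ can be evaluated as a product of explicit nonzero constant-term integrals of the form in Theorem \ref{nonsymmeval}, while $\langle z^\nu, z^{\nu^*}\rangle_0 = 0$ for $\nu \prec \nu^*$. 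This pairs $R$ against $z^{\nu^*}$ nontrivially and forces $r_{\nu^*} = 0$, contradicting maximality; iterating yields $R = 0$. Combined with the existence of $E_\lambda^{(n)}$ as a solution, this shows the linear system for the monomial coefficients of $U_\lambda$ has a unique solution continuous at $q=0$, proving the partition case of the theorem.

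For part (a), i.e.\ well-definedness of the limit for all compositions $\mu$ (not only partitions), I would invoke the Hecke-algebraic structure. The generators $T_1,\dots,T_n$ of $H$ act on Laurent polynomials by the Noumi representation with formulas independent of $q$, and their intertwining action permutes (up to scalars independent of $q$) nonsymmetric Koornwinder polynomials among indices in a single $B_n$-orbit. Using these together with the partition case, one reduces the limit of $U_\mu^{(n)}$ for any composition $\mu$ to iterated $q$-independent Hecke operators applied to $E_{\mu^+}^{(n)}$, thereby exhibiting the limit as a well-defined Laurent polynomial.

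The main obstacle I expect is the uniqueness/regularity step at $q=0$: at generic $q$ uniqueness follows by a standard Gram matrix argument, but at $q=0$ the Gram matrix of monomials $\{z^\nu : \nu \prec \lambda\}$ could a priori degenerate. Overcoming this requires the careful $z_1$-integration bookkeeping of Theorem \ref{orthog} — which controls exactly when $\langle z^\mu, z^{\nu}\rangle_0$ vanishes — to ensure the inductive argument above goes through cleanly and forces the coefficients $r_\nu$ to vanish one at a time.
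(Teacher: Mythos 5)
Your overall skeleton (triangularity plus orthogonality determine the coefficients by a linear system; show the system stays nondegenerate at $q=0$; conclude the limit of $U_\lambda$ exists and coincides with $E_\lambda$) matches the paper's proof. But the step you use to force nondegeneracy contains a genuine error: you assert that $\langle z^{\nu}, z^{\nu^*}\rangle_0 = 0$ whenever $\nu \prec \nu^*$, and you attribute this to the $z_1$-integration in Theorem \ref{orthog}. That theorem only controls pairings of the form $\langle E_{\lambda}^{(n)}, z^{\mu}\rangle_0$ for $\lambda$ a \emph{partition}; its argument depends essentially on the factors $(1-cz_i^{-1})(1-dz_i^{-1})$ in $E_{\lambda}^{(n)}$ cancelling the corresponding denominators of the density, and it says nothing about pairings of two bare monomials. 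In fact the monomial Gram matrix is not triangular with respect to $\prec$: already for $n=1$ one computes
\begin{equation*}
\langle z^{0}, z^{1}\rangle_0 \;=\; \frac{(a+b)-ab(c+d)}{(1-ac)(1-bc)(1-ad)(1-bd)(1-cd)},
\end{equation*}
which is generically nonzero even though $(0)\prec(1)$. (If the monomials were $\prec$-triangularly orthogonal, the coefficients $c_\mu$ of $U_\lambda$ would all vanish and the whole construction would be trivial.) So your ``select the $\prec$-maximal $\nu^*$ with $r_{\nu^*}\neq 0$'' induction does not close, and the nondegeneracy of $A_{[\prec\lambda]}$ at $q=0$ --- which you correctly identify as the crux --- is left unproved.

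The paper closes this gap by a different, much cheaper device: the determinant of the $q=0$ Gram matrix is a rational function of $(t,a,b,c,d)$, and at the specialization $(t,a,b,c,d)=(1,1,-1,0,0)$ the density $\Delta_K^{(n)}$ becomes identically $1$ (independently of $q$), so the monomials become orthonormal and the Gram matrix is the identity. Hence $\lim_{q\to 0}\det A_{[\prec\lambda]}$ is not identically zero, and is therefore nonzero for generic parameters; Cramer's rule then gives both existence of $\lim_{q\to 0}\vec{c}$ and its identification with the coefficient vector of $E_\lambda^{(n)}$. If you replace your triangularity claim with this specialization argument, the rest of your outline (including the reduction of general compositions to partitions via the $q$-independent Hecke operators, which the paper defers to Theorem \ref{fullqlim}) goes through.
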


\begin{proof}
By virtue of the triangularity condition for nonsymmetric Koornwinder polynomials, we can write $U_{\lambda}^{(n)}(x;q,t;a,b,c,d) = x^{\lambda} + \sum_{\mu \prec \lambda} c_{\mu} x^{\mu}$.  The orthogonality condition for these polynomials then gives
\begin{equation*}
\langle U_{\lambda}^{(n)}(x;q,t;a,b,c,d), x^{\nu} \rangle_{q} = \Big\langle x^{\lambda} + \sum_{\mu \prec \lambda} c_{\mu} x^{\mu}, x^{\nu}\Big\rangle_{q} = 0
\end{equation*}
for all $\nu \prec \lambda$.  By linearity of the inner product, one can rewrite this as 
\begin{equation*}
\Big\langle \sum_{\mu \prec \lambda} c_{\mu} x^{\mu}, x^{\nu} \Big\rangle_{q} = -\langle x^{\lambda}, x^{\nu} \rangle_{q}
\end{equation*}
for all $\nu \prec \lambda$.  
So the coefficient vector $\vec{c}$ is uniquely determined by the equation
\begin{equation} \label{GSeqn}
\vec{c} A_{[\prec \lambda]} = - \vec A_{\lambda},
\end{equation}
where $A_{[\prec \lambda]}$ is the inner product matrix of the monomials $\{x^{\nu}\}_{\nu \prec \lambda}$, and $\vec A_{\lambda}$ is the column vector of inner products of $x^{\lambda}$ with $\{x^{\nu} \}_{\nu \prec \lambda}$.  We need to check that the limit as $q \rightarrow 0$ of the entries of $\vec{c}$ is well-defined.  It suffices to show that $\lim_{q \to 0} \det A_{[\prec \lambda]}$ is not equal to zero.  We will exhibit a specialization of the parameters $(t,t_{0}, \dots, t_{3})$ for which this limit is nonzero.  To this end, consider the following specialization of parameters: $(t = 1, t_{0} = 1, t_{1} = -1, t_{2} = t_{3} = 0)$; note that it is independent of $q$.  Under this specialization, the inner product weight becomes $\Delta_{K} = 1$, and the inner product for monomials is $\langle x^{\lambda}, x^{\mu} \rangle_{q} = \delta_{\lambda, \mu}$.  In particular, the matrix $A_{[<\lambda]}$ is the identity matrix and it has determinant equal to one.  We write $\lim_{q \rightarrow 0} \vec{c}$ for the $q \rightarrow 0$ limit of the entries of the vector $\vec{c}$.    

Now, write $E_{\lambda}^{(n)}(x;c,d) = x^{\lambda} + \sum_{\mu \prec \lambda}  d_{\mu}x^{\mu}$, using Theorem \ref{triang}.  Then, using Theorem \ref{orthog}, exactly as in the previous paragraph we find $\vec{d}$ satisfies $\vec{d} A_{[<\lambda]}^{0} = -A_{\lambda}^{0}$, where $\cdot^{0}$ denotes the specialization of the inner product for monomials at $q \rightarrow 0$.  But since the vectors $\vec{c}$ and $\vec{d}$ are uniquely determined via the method discussed above, and the matrices here are the $q \rightarrow 0$ specializations of the ones in (\ref{GSeqn}), we must have $\lim_{q \rightarrow 0} \vec{c} = \vec{d}$, as desired.  
\end{proof}

We now use Definition \ref{nonsymmpart} to extend to the case where $\lambda \in \Lambda$ via a particular recursion.  We will first need to define some relevant rational functions in $t,a,b,c,d$.  

\begin{definition}
Define
\begin{equation*}
n_{\lambda} = -|\{ l<i: \lambda_{l} = -1 \text{ or } 0 \}| - 2|\{l>i+1: \lambda_{l} = 0 \}| - 1
\end{equation*}
and
\begin{equation*}
r_{\lambda} = m_{-1}(\lambda) + m_{0}(\lambda) - 1;
\end{equation*}
\end{definition}
they are statistics of the composition $\lambda$.  We use this to define rational functions $\{p_{i}(\lambda)\}_{1 \leq i \leq n}$ and $\{q_{i}(\lambda)\}_{1 \leq i \leq n}$ as follows
\begin{equation*} p_{n}(\lambda) = 
\begin{cases}
-ab-1, & \text{if } \lambda_{n} < -1 \\
-ab - 1 + abcdt^{2r_{\lambda}}, & \text{if } \lambda_{n} = -1 \\
0, & \text{if } \lambda_{n} > 1 \\
-abcdt^{2r_{\lambda}}, & \text{if } \lambda_{n} =1 \\
-ab,& \text{if } \lambda_{n} = 0
\end{cases}
\end{equation*}
and for $1 \leq i \leq n-1$
\begin{equation*}
p_{i}(\lambda) = 
\begin{cases}
t-1, & \text{if } \lambda_{i} < \lambda_{i+1} \text{ and } (\lambda_{i}, \lambda_{i+1}) \neq (-1,0) \\
\frac{(1-t)t^{n_{\lambda}}}{abcd - t^{n_{\lambda}}}, & \text{if } (\lambda_{i}, \lambda_{i+1}) = (-1,0) \\
0, & \text{if } \lambda_{i} > \lambda_{i+1} \text{ and } (\lambda_{i}, \lambda_{i+1}) \neq (0,-1) \\
\frac{(t-1)abcd}{abcd - t^{n_{\lambda}}}, & \text{if } (\lambda_{i}, \lambda_{i+1}) = (0,-1) \\
t, & \text{if } \lambda_{i} = \lambda_{i+1}.
\end{cases}
\end{equation*}
Similarly, define
\begin{equation*}
q_{n}(\lambda) = 
\begin{cases}
 -ab, & \text{if } \lambda_{n} < 0 \\
 0, & \text{if } \lambda_{n} = 0 \\
 1, & \text{if } \lambda_{n} > 1 \\
 1+cdt^{2r_{\lambda}}(-ab-1+abcdt^{2r_{\lambda}}), & \text{if } \lambda_{n} = 1
 \end{cases}
\end{equation*}
and for $1 \leq i \leq n-1$
\begin{equation*}
q_{i}(\lambda) =
\begin{cases}
t, & \text{if }\lambda_{i}<\lambda_{i+1}\\
0,& \text{if } \lambda_{i} = \lambda_{i+1} \\
1,& \text{if } \lambda_{i} > \lambda_{i+1} \text{ and } (\lambda_{i}, \lambda_{i+1}) \neq (0,-1) \\
1-\frac{(1-t)^{2}abcdt^{n_{\lambda}-1}}{(abcd-t^{n_{\lambda}})^{2}},& \text{if } (\lambda_{i}, \lambda_{i+1}) = (0,-1).
\end{cases}
\end{equation*}

\begin{definition}
For $\lambda \in \Lambda$ with $l(\lambda) \leq n$, define $E_{s_{i}\lambda}^{(n)}(z;t;a,b,c,d)$ (for $1 \leq i \leq n$) by the following recursion
\begin{equation} \label{recurqlim}
T_{i}E_{\lambda} = p_{i}(\lambda)E_{\lambda} + q_{i}(\lambda)E_{s_{i}\lambda}
\end{equation}
where $p_{i}(\lambda), q_{i}(\lambda)$ are the rational functions of the previous definition, and for $\lambda$ a partition $E_{\lambda}^{(n)}$ is given by Definition \ref{nonsymmpart}.  
\end{definition}

\begin{theorem} \label{fullqlim}
For any $\lambda \in \Lambda$, $E_{\lambda}^{(n)}(z;t;a,b,c,d)$ is well-defined and we have
\begin{equation*}
\lim_{q \rightarrow 0} U_{\lambda}^{(n)}(z;q,t;a,b,c,d) = E_{\lambda}^{(n)}(z;t;a,b,c,d).
\end{equation*}
\end{theorem}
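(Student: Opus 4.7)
The plan is to obtain Theorem \ref{fullqlim} by taking the $q \to 0$ limit of the standard Hecke algebra intertwining relations for the $U_\lambda^{(n)}$. The crucial observation, explicitly noted earlier, is that the generators $T_1, \dots, T_n$ of the Noumi representation act on polynomials independently of $q$, so taking the limit $q \to 0$ commutes with applying any of these $T_i$'s.

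The base case is furnished by Theorem \ref{qlim}, which gives $\lim_{q \to 0} U_\lambda^{(n)} = E_\lambda^{(n)}$ for every partition $\lambda$. For an arbitrary composition $\mu \in \Lambda$, the orbit of $\mu$ under the finite Weyl group $B_n = \langle s_1, \dots, s_n\rangle$ contains exactly one partition, namely $\mu^+$. Iterating the recursion in (\ref{recurqlim}) starting from $E_{\mu^+}^{(n)}$ along any reduced expression for the element of $B_n$ sending $\mu^+$ to $\mu$ therefore produces a candidate polynomial for $E_\mu^{(n)}$.

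To identify this candidate with $\lim_{q \to 0} U_\mu^{(n)}$, I would invoke the standard intertwining formulas $T_i U_\lambda^{(n)} = \alpha_i(\lambda) U_\lambda^{(n)} + \beta_i(\lambda) U_{s_i \lambda}^{(n)}$ for the nonsymmetric Koornwinder polynomials, with $\alpha_i, \beta_i$ rational in $q, t, a, b, c, d$ and explicitly expressible through the $Y$-eigenvalues at $\lambda$. Solving for $U_{s_i \lambda}^{(n)}$ and passing to the limit $q \to 0$ (legitimate because $T_i$ is $q$-independent), one obtains a recursion of the same shape as (\ref{recurqlim}); the theorem then reduces to checking $\lim_{q \to 0}\alpha_i(\lambda) = p_i(\lambda)$ and $\lim_{q \to 0}\beta_i(\lambda) = q_i(\lambda)$ for every admissible pair $(\lambda, i)$.

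The main obstacle is precisely this last matching. Although the $\alpha_i, \beta_i$ admit uniform generic expressions, several pairs of spectral parameters coalesce at $q=0$ even when they remain distinct for generic $q$; this yields indeterminate $0/0$ forms in the naive limit and requires a first-order expansion in $q$ to extract the correct leading behavior. This degeneracy is what forces the special formulas for $p_i, q_i$ at $(\lambda_i, \lambda_{i+1}) \in \{(-1, 0),(0,-1)\}$ and for $p_n, q_n$ at $|\lambda_n| \leq 1$, and is responsible for the appearance of the denominators $abcd - t^{n_\lambda}$. Once the coefficient match is verified, well-definedness of $E_\mu^{(n)}$ for arbitrary $\mu$ follows automatically: the braid relations of the Hecke algebra imply that any two reduced expressions for the same Weyl element act identically on $U_{\mu^+}^{(n)}$, and taking $q \to 0$ preserves these identities, so the value produced by the recursion depends only on $\mu$ and not on the chosen word.
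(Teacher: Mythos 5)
Your proposal follows essentially the same route as the paper: the partition case from Theorem \ref{qlim} serves as the base, and the general case is obtained by passing to the $q\to 0$ limit of the Hecke intertwining recursion for $U_{\lambda}^{(n)}$ (Stokman's Proposition 6.1), exploiting the fact that $T_{1},\dots,T_{n}$ act independently of $q$. The only tactical difference is in handling the degenerate coefficient limits: where you propose a first-order expansion in $q$ to resolve the $0/0$ forms, the paper verifies the limits directly only in the cases $\lambda_{n}\le 0$ and $\lambda_{i}\le\lambda_{i+1}$, and then derives the remaining cases by applying $T_{n}$ and $T_{i}$ to those recursions and using the quadratic relations of the Hecke algebra.
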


\begin{proof} 
The case when $\lambda$ is a partition has been established by Theorem \ref{qlim}.  The rest of the result is obtained by showing that \cite{Stok2} Proposition 6.1 admits the limit $q \rightarrow 0$ and that recursion in fact becomes (\ref{recurqlim}) in this limit.  As mentioned above, it is crucial that the operators $T_{i}$ for $1 \leq i \leq n-1$ are independent of $q$; these are the operators that appear here.  We note that the parameters must be translated according to the following reparametrization:
\begin{equation*}
\{a,b,c,d,t \} \leftrightarrow \{ t_{n}\check{t_{n}}, -t_{n}\check{t_{n}}^{-1}, t_{0}\check{t_{n}}q^{1/2}, -t_{0}\check{t_{0}}^{-1}q^{1/2}, t^{2} \};
\end{equation*}
in particular, this reparametrization yields $T_{0} = t_{0}Y_{0}, T_{n} = t_{n}Y_{n}, T_{i} = tY_{i}$ (for $1 \leq i \leq n-1$), where $\{Y_{i}\}_{0 \leq i \leq n}$ are the Hecke operators of \cite{Stok2}.  

It is a computation to directly verify that the limits exist in the cases $\lambda_{n} \leq 0$ and $\lambda_{i} \leq \lambda_{i+1} (1 \leq i \leq n-1)$.  We then apply $T_{n}$ and $T_{i}(1 \leq i \leq n-1)$ to the resulting recursions and use the quadratic relations
\begin{equation*}
T_{n}^{2} = -ab - T_{n} -abT_{n}
\end{equation*}
and 
\begin{equation*}
T_{i}^{2} = t- T_{i} +tT_{i}
\end{equation*}
and simplify to obtain the recursion in the remaining cases $\lambda_{n} > 0$ and $\lambda_{i} > \lambda_{i+1}$.
\end{proof}

As a byproduct of orthogonality for the $\{U_{\lambda}^{(n)}(z;q,t;a,b,c,d)\}_{\lambda \in \Lambda}$ we obtain the complete orthogonality for the $q=0$ limiting case.

\begin{corollary}
Let $\lambda, \mu \in \mathbb{Z}^{n}$ be compositions.  If $\lambda \neq \mu$, we have $\langle E_{\lambda}^{(n)}, E_{\mu}^{(n)}\rangle_{0} = 0$.  If $\mu \prec \lambda$, then we have $\langle E_{\lambda}^{(n)}, z^{\mu} \rangle_{0} = 0$.
\end{corollary}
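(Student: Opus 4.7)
The plan is to obtain this corollary simply by passing to the $q \rightarrow 0$ limit in the known orthogonality relations for the nonsymmetric Koornwinder polynomials. Recall that by definition $U_{\lambda}^{(n)}(z;q,t;a,b,c,d)$ is triangular with top monomial $z^{\lambda}$ and satisfies $\langle U_{\lambda}^{(n)}, z^{\nu}\rangle_{q} = 0$ for $\nu \prec \lambda$; combining triangularity with the Gram--Schmidt construction gives the stronger $\langle U_{\lambda}^{(n)}, U_{\mu}^{(n)}\rangle_{q} = 0$ whenever $\lambda \neq \mu$. The goal is to push both statements through the limit $q \rightarrow 0$.

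First I would invoke Theorem \ref{fullqlim} to get $\lim_{q \to 0} U_{\lambda}^{(n)}(z;q,t;a,b,c,d) = E_{\lambda}^{(n)}(z;t;a,b,c,d)$ coefficient-by-coefficient as Laurent polynomials in $z$, and in particular uniformly on the torus $T$. Next I would inspect the explicit product formula (\ref{nonsymmdensity}) for $\Delta_{K}^{(n)}(z;q,t;a,b,c,d)$: the $q$-Pochhammer factors involving $q z_{i}^{-2}$, $a q z_{i}^{-1}$, $b q z_{i}^{-1}$, and $q z_{i}^{-1} z_{j}^{\pm 1}$ reduce to $1$ at $q=0$, while the remaining factors are already $q$-independent, so $\Delta_{K}^{(n)}(z;q,t;a,b,c,d) \to \Delta_{K}^{(n)}(z;t;a,b,c,d)$ uniformly on $T$ provided $|t|, |a|, |b|, |c|, |d| < 1$ (which keeps the denominators bounded away from zero on the torus).

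With these two uniform convergence statements in hand, one can interchange limit and integral by dominated convergence and conclude
\begin{equation*}
\langle E_{\lambda}^{(n)}, E_{\mu}^{(n)}\rangle_{0} = \lim_{q\rightarrow 0} \langle U_{\lambda}^{(n)}, U_{\mu}^{(n)}\rangle_{q} = 0 \quad (\lambda \neq \mu),
\end{equation*}
and analogously $\langle E_{\lambda}^{(n)}, z^{\mu}\rangle_{0} = \lim_{q \to 0} \langle U_{\lambda}^{(n)}, z^{\mu}\rangle_{q} = 0$ for $\mu \prec \lambda$. Note that the involution $\bar{\;}^{\iota}$ acts on the parameters $q,t,a,b,c,d$ by inversion; however on a pure Laurent monomial $z^{\nu}$ (or on the polynomials $U_{\mu}^{(n)}$ viewed termwise) it only inverts the variable, so the $q \to 0$ limit of the second argument is equally well behaved.

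The only real obstacle is the justification of the interchange of limit and integral, and this reduces to the observation just made that the limiting density has no poles on $T$ under the standing parameter assumptions. No new combinatorics or recursions are needed, since the triangularity and partial orthogonality at $q=0$ have already been established (Theorems \ref{triang} and \ref{orthog}) and Theorem \ref{fullqlim} supplies the identification $\lim_{q\to 0} U_{\lambda}^{(n)} = E_{\lambda}^{(n)}$ that promotes the non-partition cases as well.
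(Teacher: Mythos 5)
Your proposal is correct and follows essentially the same route as the paper, which disposes of this corollary in one sentence by citing the $q$-level orthogonality of the nonsymmetric Koornwinder polynomials together with Theorem \ref{fullqlim}; you have simply filled in the limiting details (convergence of the density on $T$ and the interchange of limit and integral) that the paper leaves implicit.
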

\begin{proof}
This follows from the orthogonality for the $q$-nonsymmetric Koornwinder polynomials and Theorem \ref{fullqlim}.  
\end{proof}

\end{document}